\definecolor{gr}{rgb}   {0.,   0.69,   0.23 }
\definecolor{bl}{rgb}   {0.,   0.5,   1. }
\definecolor{mg}{rgb}   {0.85,  0.,    0.85}
\definecolor{yl}{rgb}   {0.8,  0.7,   0.}
\definecolor{or}{rgb}  {0.7,0.2,0.2}
\tikzset{
	ddot/.style={circle,fill=white,draw=black,inner sep=0pt,minimum size=0.8mm},
	>=stealth,
	}
\tikzset{
	ddot2/.style={circle,fill=black,draw=black,inner sep=0pt,minimum size=0.8mm},
	>=stealth,
	}
\newtheorem{theorem}{Theorem} [section]
\newtheorem{lemma}[theorem]{Lemma}
\newtheorem{proposition}[theorem]{Proposition}
\newtheorem{remark}[theorem]{Remark}
\newtheorem{corollary}[theorem]{Corollary}
\DeclareMathOperator*{\supp}{supp}
\newcommand{\1}{\hspace{0.5mm}\text{I}\hspace{0.2mm}}
\newcommand{\II}{\text{I \hspace{-2.8mm} I} }
\newcommand{\I}{\mathcal{I}}
\newcommand{\noi}{\noindent}
\newcommand{\Z}{\mathbb{Z}}
\newcommand{\R}{\mathbb{R}}
\newcommand{\T}{\mathbb{T}}
\let\Re=\undefined\DeclareMathOperator*{\Re}{Re}
\let\Im=\undefined\DeclareMathOperator*{\Im}{Im}
\let\P= \undefined
\newcommand{\P}{\mathbf{P}}
\newcommand{\Q}{\mathbf{Q}}
\newcommand{\al}{\alpha}
\newcommand{\dl}{\delta}
\newcommand{\nb}{\nabla}
\newcommand{\Dl}{\Delta}
\newcommand{\eps}{\varepsilon}
\newcommand{\g}{\gamma}
\newcommand{\G}{\Gamma}
\newcommand{\ld}{\lambda}
\newcommand{\s}{\sigma}
\newcommand{\Si}{\Sigma}
\newcommand{\ft}{\widehat}
\newcommand{\wt}{\widetilde}
\newcommand{\dt}{\partial_t}
\newcommand{\ta}{\theta}
\renewcommand{\l}{\ell}
\renewcommand{\o}{\omega}
\renewcommand{\O}{\Omega}
\newcommand{\les}{\lesssim}
\newcommand{\jb}[1]
{\langle #1 \rangle}
\newcommand{\ind}{\mathbf 1}
\newcommand{\pa}{\partial}
\newcommand{\N}{\mathbb{N}}
\newcommand{\NN}{\mathcal{N}}
\renewcommand{\H}{\mathcal{H}}
\newtheorem*{ackno}{Acknowledgements}
\newcommand{\too}{\longrightarrow}
\newcommand{\Id}{\textup{\bf Id}}
\numberwithin{equation}{section}
\numberwithin{theorem}{section}
\begin{document}
\baselineskip = 14pt

\title[Probabilistic global well-posedness of vNLW]
{Probabilistic global well-posedness for a viscous nonlinear wave equation modeling 
fluid-structure interaction}
%{Probabilistic global well-posedness of the viscous nonlinear wave equation
%with a defocusing quintic nonlinearity}

%
\author[J.~Kuan, T.~Oh, and S. \v{C}ani\'{c}]
{Jeffrey Kuan, Tadahiro Oh, and Sun\v{c}ica \v{C}ani\'{c}}

\address{Jeffrey Kuan\\
Department of Mathematics\\
University of California Berkeley\\
Berkeley, CA, USA 94720-3840\\ USA}

\email{jeffreykuan@berkeley.edu}

\address{Tadahiro Oh, School of Mathematics\\
The University of Edinburgh\\
and The Maxwell Institute for the Mathematical Sciences\\
James Clerk Maxwell Building\\
The King's Buildings\\
Peter Guthrie Tait Road\\
Edinburgh\\ 
EH9 3FD\\
 United Kingdom}

\email{hiro.oh@ed.ac.uk}

\address{Sun\v{c}ica \v{C}ani\'{c}\\
Department of Mathematics\\
University of California Berkeley\\
Berkeley, CA, USA 94720-3840\\ USA} 

\email{canics@berkeley.edu}

\subjclass[2020]{35L05, 35L71, 35R60, 60H15}

\keywords{viscous nonlinear wave equation; 
Wiener randomization; probabilistic well-posedness}

\begin{abstract}
We prove probabilistic well-posedness for a 2D viscous nonlinear wave equation modeling
fluid-structure interaction between a 3D incompressible, viscous Stokes flow 
and nonlinear elastodynamics of a 2D stretched membrane. The focus is on (rough)  data, often arising in real-life problems,
 for which it is known that the deterministic problem is ill-posed. 
 We show that  random perturbations of such data give rise almost surely to the existence of a unique solution. 
More specifically, we prove almost sure global well-posedness  for a viscous nonlinear wave equation with the subcritical initial data 
in the Sobolev space 
$\H^s (\R^2)$, $s > -  \frac 15$, which are randomly perturbed  using Wiener randomization. 
This result shows ``robustness'' of nonlinear FSI problems/models, and provides confidence that even for  the ``rough data'' (data in $\H^s$, $s >  -\frac 1 5$)
random perturbations of such data 
(due to e.g., randomness in real-life data, numerical discretization, etc.) will almost surely provide a unique solution 
which depends continuously on the data in the $\H^s$ topology.

%We continue the study of low regularity behavior of the 
%viscous nonlinear wave equation (vNLW) on $\R^2$, 
%initiated by  \v{C}ani\'c and the first author (2021).
%In this paper, we focus on the defocusing quintic nonlinearity
%and, by combining a parabolic smoothing with a probabilistic energy estimate, 
%we prove almost sure global well-posedness 
%of vNLW
%for initial data in $\H^s (\R^2)$, $s >-  \frac 15$, under a suitable randomization.

\end{abstract}

%\date{\today}
%%
%
\maketitle
%
%\tableofcontents

%\newpage

\section{Background}

%\subsection{Viscous nonlinear wave equation}

This paper is motivated by a study of fluid-structure (FSI) interaction and the impact of {\emph{rough data}} and {\emph{random perturbations}} of such data
on the solution of a nonlinear fluid-structure interaction problem, where the nonlinearity may come, e.g., from a nonlinear forcing of the structure. 
The main motivation derives from real-life applications that often exhibit such data and nonlinear forcing, e.g., coronary arteries contracting and expanding
on the surface of a moving heart, where the forcing comes from the surrounding heart tissue. 

In particular, we are interested in the flow of a viscous incompressible fluid modeled by the Stokes equations
in a channel that is bounded by a two-dimensional membrane, modeled by a nonlinear wave equation. See Fig.~\ref{domain}.
A competition between dissipative effects coming from the fluid viscosity, dispersion and  nonlinear effects coming from the structure model
are of particular interest, especially for the (rough) initial data for which one can show that the Sobolev ${\H^s}$ norm gets inflated 
very quickly after a small time $t_\epsilon > 0$ (the ${\H^s}$ norm gets greater than $1/\epsilon$), 
even for the initial data with a small ${\H^s}$ norm (less than $\epsilon$). For example, due to the nonlinearity in the problem, 
``small'' oscillations in the initial data get amplified quickly, giving rise to an ill-posed problem in Hadamard sense. 
This is often the case for the initial data in ${\H^s}$  with the Sobolev exponent $s$ below a critical exponent $s_\text{crit}$ (rough initial data), 
where $s_\text{crit}$ is ``given'' by the 
``natural'' scaling of the problem. 
In this paper we show that for a range of Sobolev exponents {\emph{below the critical exponent}}, $s_\text{min} < s < s_\text{crit}$ where $s_\text{min} < 0$, random perturbations of such initial data via a Wiener randomization
will still give rise to a globally well-posed problem almost surely. 
This result shows ``robustness'' of nonlinear FSI problems/models, and provides confidence that even for the ``rough data'' (data in $\H^s$, $s\in (s_\text{min}, s_\text{crit})$),
random perturbations due to a combination of factors (e.g., real-life data, numerical discretization, etc.) will almost surely provide a solution 
which depends continuously on the data in the $\H^s$ topology.
%The nonlinearity tends to amplify certain Sobolev norms of the initial data (causing oscillations growth, for example), 
%while dissipation coming from the fluid tends to 
\begin{figure}[htp!]
\center
                    \includegraphics[width = \textwidth]{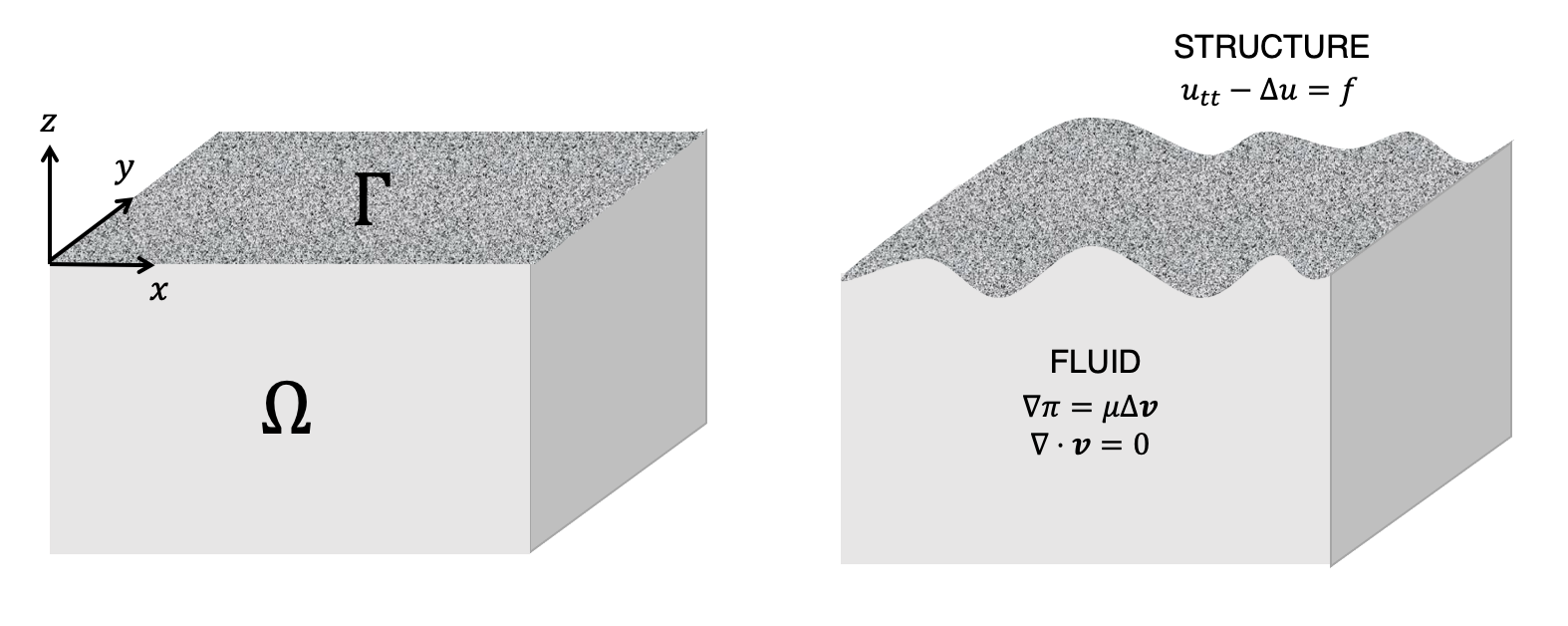}
  \caption{A sketch of the reference configurations for the structure and fluid (left), and the fluid-structure interaction system with nonzero vertical displacement of the structure (right).}
\label{domain}
\end{figure}

More precisely, we study a prototype equation capturing dispersive, dissipative, and nonlinear (forcing) 
effects in fluid-structure interaction problems between the flow of an incompressible, viscous fluid modeled by the 3D Stokes equations, 
and the elastodynamics of a 2D elastic membrane modeled by a nonlinear wave equation. 
The prototype equation is 
the following  {\emph{viscous nonlinear wave equation}} (vNLW) defined on $\R^2$, given by a 2D nonlinear wave equation (NLW)
augmented by the viscoelastic effects modeled by the {\emph{fractional Laplacian operator}} 
(Dirichlet-to-Neumann operator) $D = |\nb| = \sqrt{-\Dl}$ applied to the time derivative $\dt u$, where $u$ denotes vertical membrane displacement:
\begin{align}
\begin{cases}
\dt^2 u - \Dl  u  + 2\mu D \dt u  + |u|^{p-1}u  = 0\\
(u, \dt u)|_{t = 0} = (u_0, u_1)
\end{cases}
\qquad (x, t) \in \R^2 \times \R_+, 
\label{vNLW1}
\end{align}

\noi
Here $\mu > 0$ is a constant denoting fluid viscosity, and $\R_+ = [0, \infty)$.

\if 1 = 0
The viscous nonlinear wave equation arises from a prototypical model for fluid-structure interaction, and models wave dynamics under the influence of viscous regularizing effects. Fluid-structure interaction (FSI) is a physical phenomenon involving the coupled dynamical interaction between a solid and a fluid, where the solid is for instance, deformable with elastic or viscoelastic properties. Such problems feature mathematical difficulties, in terms of the coupling between the solid and fluid equations, and additional geometric nonlinearities that appear in problems in which the fluid domain evolves over time, giving rise to a moving boundary problem. As such, the nonlinear viscous wave equation gives a simplified prototypical model in which one can study well-posedness properties of the FSI system, by isolating the effects of fluid viscosity and nonlinear effects on the elastic structure. 

The mathematical study of FSI is an area of extensive research, motivated by the presence of numerous physical applications. In recent years, many FSI problems motivated by real-life physical systems have been considered in the mathematics literature, including hemodynamics in a curved compliant artery with a coronary stent \cite{CanicCMAME}, the dynamics of floating objects on bodies of water \cite{Lannes}, multilayered poroelastic structures interacting with fluid flow as a model for a bioartificial pancreas \cite{BCMW}, and flow-structure interactions motivated by applications in aeroelasticity \cite{LasieckaAbsorbing, LasieckaSupersonic, LasieckaLongTime, LasieckaRotational, WebsterSubsonic}.

The nature of the two-way coupling between the partial differential equations
(PDEs)
 describing the dynamics of the fluid and the elastodynamics of the structure is crucial in the study of FSI. In mathematical models of FSI, one can consider either linear coupling or nonlinear coupling. The mathematical study of FSI first involved the well-posedness analysis of models with the assumption of \textit{linear coupling}, including a models involving an elastic solid interacting with a fluid described by the linear Stokes equations \cite{Gunzburger}, or the full Navier-Stokes equations \cite{BarGruLasTuff2,BarGruLasTuff,KukavicaTuffahaZiane}. Even though the deformation or displacement of the structure often affects the fluid domain in FSI problems, \textit{linear coupling} is a linearization that assumes the fluid-structure interface is fixed, and evaluates the coupling conditions along the fixed (linearized) fluid-structure interface. Other works have considered the more general case of \textit{nonlinear coupling}, in which the fluid domain is not known a priori. Thus, one must solve a moving boundary problem that has  additional geometric nonlinearities, which complicate the analysis of the problem. See for example, \cite{BdV1,Lequeurre,FSIforBIO_Lukacova,CDEM,CG,MuhaCanic13,
BorSun3d,LengererRuzicka,CSS1,CSS2,Kuk,ChenShkoller,ChengShkollerCoutand,IgnatovaKukavica,Raymod,ignatova2014well,
BorSunSlip,BorSunNonLinearKoiter,BorSunMultiLayered,Grandmont16, CanicCMAME}.

The viscous nonlinear wave equation, first considered in \cite{KC}, arises naturally as a prototypical model for linearly coupled fluid-structure interaction between an elastic structure with nonlinear forcing effects and a fluid that isolates the interaction between the viscous effects of the fluid, nonlinear effects, and the hyperbolic dynamics of the elastic structure. This model features an elastic membrane and a fluid modeled by the stationary Stokes equations, coupled together. Even though this FSI model has two components (the structure and the fluid), the various assumptions in the model, including the linear coupling assumption, give rise to a self-contained equation for the elastodynamics of the structure without reference to the fluid, which is the viscous nonlinear wave equation \eqref{vNLW1}.

The viscous nonlinear wave equation \eqref{vNLW1}, which arises from this fluid-structure interaction model, is given by the usual nonlinear wave equation, considered for example in \cite{TAO}, augmented by the fractional Laplacian operator $D = \sqrt{-\Delta}$ acting on the structure velocity $\partial_{t}u$. The operator $D$ arises naturally as the Dirichlet-Neumann operator for the lower half-plane in $\mathbb{R}^{d}$ (see, for example, \cite{CS} for more information about the fractional Laplacian operator). The presence of this operator in the viscous nonlinear wave equation represents 
the (parabolic) regularizing effects of the fluid viscosity on the structure dynamics. 
For this reason, we study  the viscous nonlinear wave equation \eqref{vNLW1}
by making use of both the dispersive and dissipative properties
of the dynamics.
\fi
%
%Because of the form of the equation \eqref{vNLW1}, it can be studied using modifications of many of the techniques used for the nonlinear wave equation.
{\bf{Background on the viscous nonlinear wave equation.}} The viscous nonlinear wave equation \eqref{vNLW1} was derived in \cite{KC} by coupling the elastodynamics of a 2D elastic, prestressed membrane 
whose reference configuration is given by the infinite plane
\begin{equation*}
\Gamma = \{(x, y, 0) \in \mathbb{R}^{3}\},
\end{equation*}
with the flow of an incompressible, viscous Newtonian fluid residing in the lower half space, which we will denote by
\begin{equation*}
\Omega = \{(x, y, z) \in \mathbb{R}^{3} : z < 0\}.
\end{equation*}

\noi
See Figure \ref{domain}.
The membrane and the fluid are \textit{linearly coupled}, namely, the fluid domain remains fixed over time. 
The structure is assumed to only experience displacement in the $z$ direction, which is denoted by $u$,
where $u$ satisfies the following wave equation 
\begin{equation}\label{wave}
\partial_{t}^{2}u - \Delta u = f, \qquad (x, y, t) \in \mathbb{R}^{2} \times \mathbb{R}_{+}.
\end{equation}
Here $f$ is the external loading force on the elastic membrane, which can be nonlinear, as we specify later.

The membrane interacts with the flow of  an incompressible, viscous Newtonian fluid,
defined on the domain $\Omega$, which is fixed in time due to the assumption of linear coupling. 
In order to isolate the dynamical effect of the fluid viscosity on the structure, we model the fluid velocity $\boldsymbol{v} = (v_1, v_2, v_3)$ and pressure $\pi$ by the stationary Stokes equations 
\begin{align}\label{stokes}
\begin{cases}
\nabla \pi = \mu \Delta \boldsymbol{v} \\
\nabla \cdot \boldsymbol{v} = 0
\end{cases} \qquad \text{ on } \Omega,
\end{align}
where the constant, $\mu > 0$, denotes the fluid viscosity. See Figure \ref{domain}.

The fluid and structure are coupled via a two-way coupling, specified by the following two coupling conditions:\begin{enumerate}
\item The \textit{kinematic coupling condition}, which in our problem is a no-slip condition (the trace of the fluid velocity at the interface $\Gamma$ is equal to the 
structure velocity):
% which states that particles of the fluid on the boundary, assumed to be fixed by the linear coupling assumption, move with the same velocity as the corresponding point on the structure. Since the structure displaces only in the $z$ direction, the kinematic coupling condition reads
\begin{equation}
\boldsymbol{v} = (\partial_{t}u) \boldsymbol{e_{z}}, \qquad \text{ on } \Gamma,
\label{kin1}
\end{equation}

\noi
where $ \boldsymbol{e_{z}} = (0, 0, 1)$; and

\item The \textit{dynamic coupling condition}, which states that the elastodynamics of the membrane is driven by the jump across $\Gamma$ between the normal component 
of the normal Cauchy fluid stress $\boldsymbol{\sigma}$ and the external forcing $F_{\text{ext}}$:
\begin{equation*}
f = -\boldsymbol{\sigma} \boldsymbol{e_{z}} \cdot \boldsymbol{e_{z}}|_{\Gamma} + F_{\text{ext}},
\end{equation*}
where
\begin{equation}\label{kinematic}
\boldsymbol{\sigma} = -\pi \Id  + 2\mu \boldsymbol{D}(\boldsymbol{v}).
\end{equation}
Thus, the structure equation with the dynamic coupling condition reads
\begin{equation}\label{dynamic}
\partial_{t}^{2} u - \Delta u = -\boldsymbol{\sigma} \boldsymbol{e_{z}} \cdot \boldsymbol{e_{z}}|_{\Gamma} + F_{\text{ext}}.
\end{equation}
\end{enumerate}

For completeness, we summarize the derivation here.
%The  nonlinear viscous wave equation is derived by using the stationary Stokes equations for the fluid \eqref{stokes} along with the kinematic coupling condition \eqref{kin1}
%and the dynamic coupling condition \eqref{kinematic} to express the term $-\boldsymbol{\sigma} \boldsymbol{e_{z}} \cdot \boldsymbol{e_{z}}$ in the equation \eqref{dynamic} entirely in terms of $u$ and its derivatives, which will hence give a self-contained equation for the dynamics of the structure without reference to the fluid. 
We start by noting that the fluid load is given \textit{entirely by the pressure}, due to the particular geometry of this model. Specifically, 
\begin{equation*}
-\boldsymbol{\sigma}\boldsymbol{e_{z}} \cdot \boldsymbol{e_{z}} |_{\Gamma} 
= \bigg( \pi - 2\mu \frac{\partial v_{3}}{\partial z}\bigg)\bigg|_{\Gamma} = \pi|_{\Gamma},
\end{equation*}
which follows from  $\frac{\partial v_{3}}{\partial z}|_\G = 0$ by the incompressibility condition, the kinematic coupling condition \eqref{kin1}, and the fact that $v_{1} = v_{2} = 0$ on $\Gamma$. Hence, we obtain
\begin{equation}\label{pressuredyn}
\partial_{t}^{2} u - \Delta u = \pi|_{\Gamma} + F_{\text{ext}}.
\end{equation}
One can then ``solve'' the stationary Stokes equations \eqref{stokes} with the boundary condition \eqref{kin1} on $\Gamma$ for $\pi$ using a Fourier transform argument, to obtain the final result:
\begin{equation}\label{pressure}
\pi|_{\Gamma} = -2\mu D \dt u.
\end{equation}
We will only sketch the main steps of this derivation 
in the following, and refer  readers to the full derivation in \cite{KC}.

From \eqref{pressuredyn}, we see that the goal is to express  $\pi|_{\Gamma}$ in terms of the structure displacement $u$ and its derivatives. We use the fact that $\pi$ and $\boldsymbol{v}$ satisfy the stationary Stokes equations~\eqref{stokes}
%\begin{align*}
%\begin{cases}
%\nabla \pi = \mu \Delta \boldsymbol{v} \\
%\nabla \cdot \boldsymbol{v} = 0
%\end{cases} \qquad \text{ on } \Omega,
%\end{align*} 
with a boundary condition provided by the kinematic coupling condition~\eqref{kin1}.
We impose a boundary condition at infinity that $\boldsymbol{v}$ is bounded and $\pi$ decreases to zero at infinity. 
From the stationary Stokes equations \eqref{stokes}, one concludes that $\pi$ is a harmonic function in $\Omega$ with a normal derivative along $\Gamma$ given by
\begin{equation}\label{Neumann}
\frac{\partial \pi}{\partial z}\bigg|_{\G} 
= \bigg(\mu \Delta_{x, y} v_{3} + \mu \frac{\partial^{2}v_{3}}{\partial z^{2}}\bigg)\bigg|_{\G} .
%= \mu \Dl v_3|_\G.
\end{equation}

\noi
Hence, 
 we can find $\pi|_{\Gamma}$ in \eqref{pressure} by inverting the Dirichlet-Neumann operator on the 
 lower half space $\Omega$. 
 
 Our main goal is to express
  $v_{3}$ in terms of $u$ and its derivatives, as this will give the Neumann boundary condition for the harmonic function $\pi$ in \eqref{Neumann}. 
  By taking the Laplacian of the first equation in \eqref{Neumann} and recalling that $\pi$ is harmonic, 
  we see that $v_3$ satisfies the biharmonic equation:
\begin{equation}
\Delta^{2} v_{3} = 0
\label{kin3}
\end{equation}
with boundary conditions given by the kinematic boundary condition (see \eqref{kin1}):
\begin{equation}
v_{3}|_{\Gamma} =\dt  u
\label{kin4}
\end{equation}
 Furthermore, there is a boundary condition at infinity that $v_{3}$ must be bounded in 
 $\O$.

By taking the Fourier transform of \eqref{kin3} in the $x$ and $y$ variables 
but not the $z$ variable, we can establish that 
\begin{equation}
\ft v_3 (\xi, z) = \ft {\dt u}(\xi )e^{|\xi|z} - |\xi|\ft{\dt u}(\xi)ze^{|\xi| z}, 
\label{kin5}
\end{equation}

\noi
where $\xi$ denotes the frequency variable corresponding to the $x$ and $y$ variables.
For more details, see the explicit calculation in \cite{KC}. 
Then, by taking the Fourier transform of \eqref{Neumann} in the $x$ and $y$ variables 
and using \eqref{kin4} and \eqref{kin5}, we obtain
\begin{equation}
\frac{\partial \widehat{\pi}}{\partial z}\Big\vert_{\Gamma} = -2\mu|\xi|^{2} \ft {\dt u }(\xi).
\label{kin6}
\end{equation}

\noi
By taking the inverse Fourier transform, this gives 
 the Neumann boundary condition for the harmonic function $\pi$
 in terms of (derivatives) of $u$.
Recall that 
the Dirichlet-Neumann operator for the lower half plane with the vanishing  boundary condition at  infinity is given by $D = \sqrt{-\Delta}$; see \cite{CS}.
By inverting this operator, 
we see that  the Neumann-Dirichlet operator with the same boundary condition at infinity is 
given by the Riesz potential $D^{-1} = (-\Dl)^{-\frac 12}$
with a Fourier multiplier  $|\xi|^{-1}$.
Therefore, by applying the 
 Neumann-Dirichlet operator to (the inverse Fourier transform of) \eqref{kin6}, 
 we obtain the desired result in \eqref{pressure}. 
%The analysis of the biharmonic equation to study properties of the stationary Stokes equations is a classical approach; see \cite{Sim}.

%The  nonlinear viscous wave equation incorporates nonlinear effects on the structure. This is motivated by the fact that nonlinear effects in FSI systems appear in physical applications, such as in the study of blood flow in a curved compliant artery with a stent in \cite{CanicCMAME}, in which nonlinear elastic effects appear due to the curved cylindrical geometry of the artery, in the form of a cubic nonlinearity. In addition, such external forcing on the structure in fluid-structure interaction has been described in the context of compliant arteries in the human cardiovascular system, as forces exerted by surrounding tissues on arterial walls \cite{Astorino}. 
As a model for nonlinear restoring external forcing effects, we consider a defocusing power nonlinearity of the form
\begin{equation}\label{Fext}
F_{\text{ext}}(u) = -|u|^{p - 1} u,
\end{equation}
for positive integers $p > 1$.
Such a power-type nonlinearity has been studied extensively 
for dispersive equations
 such as the nonlinear Schr\"{o}dinger equations and the nonlinear wave equations; see, for example, \cite{TAO}. Combining \eqref{pressuredyn}, \eqref{pressure}, and \eqref{Fext} gives the final form of the viscous nonlinear wave equation, as stated in \eqref{vNLW1}, in dimension $d = 2$. Although $d = 2$ corresponds 
 to the scenario described in this fluid-structure interaction model, the equation \eqref{vNLW1} can be stated in full generality for arbitrary dimension $d$.

\medskip

{\bf{Critical exponent and ill-posedness.}} Let us now turn to analytical aspects of the viscous NLW \eqref{vNLW1}.
When $\mu \geq 1$, this equation is purely parabolic, 
where the general solution to the homogeneous linear equation
\begin{align*}
\dt^2 u - \Dl  u  + 2\mu D \dt u  = 0
%\label{vNLW1a}
\end{align*}

\noi
with initial data $(u, \partial_{t}u)|_{t = 0} = (u_{0}, u_{1})$, is given by 
\[ u(t) =  e^{-\mu|\nb| + \sqrt{(\mu^2 - 1) |\nb|^2}}
f_1 + e^{-\mu|\nb| - \sqrt{(\mu^2 - 1) |\nb|^2}} f_2.
\]

\noi
Noting that $-\mu |\xi|  + \sqrt{(\mu^2 - 1) |\xi|^2}\sim \mu^{-1} |\xi|$
in this case ($\mu \ge 1$), the solution theory can be studied by 
simply using the Schauder estimate
for the Poisson kernel (see Lemma \ref{LEM:Sch} below).
We will not pursue this direction in this paper.
Instead, 
our main interest in this paper is to study the 
combined effect of the dissipative-dispersive mechanism, 
appearing in \eqref{vNLW1}.
As such, we will restrict our attention to $0 < \mu < 1$.
Without loss of generality, 
we set $\mu = \frac 12$ as in \cite{KC}
and focus on the following version of vNLW:
\begin{align}
\begin{cases}
\dt^2 u - \Dl  u  + D \dt u  + |u|^{p-1} u = 0\\
(u, \dt u) |_{t = 0} = (u_0, u_1).
\end{cases}
\label{vNLW1b}
\end{align}

As in the case of the usual NLW:
\begin{align}
\dt^2 u - \Dl  u   + |u|^{p-1}u  = 0, 
\label{vNLW2}
\end{align}

\noi
the viscous NLW in \eqref{vNLW1b} enjoys the following scaling symmetry.
If $u(x, t)$ is a solution to~\eqref{vNLW1b}, 
then $u^\ld (x, t) = \ld^\frac{2}{p-1} u(\ld x, \ld t)$ is also a solution to \eqref{vNLW1b}
for any $\ld > 0$.
This induces  the critical Sobolev regularity $s_\text{crit}$ on $\R^d$ given by 
\[ s_\text{crit} = \frac d2 - \frac 2{p-1} \] 

\noi
such that the homogeneous Sobolev norm on $\R^2$ remains invariant
under this scaling symmetry.
This scaling heuristics provides a 
common conjecture
 that an evolution equation is well-posed in $H^s$ for $s > s_\text{crit}$, 
 while it is ill-posed for $s < s_\text{crit}$. 
Indeed, for many dispersive PDEs, 
ill-posedness below a scaling critical regularity is known.
In particular, the following form of
strong ill-posedness, known as {\it norm inflation}, 
is established for many dispersive PDEs, 
including NLW; see
\cite{CCT, BT1, CK, Kishimoto,  O1, OW, CP, Ok, Tzvet, OOTz,  FO}.
Norm inflation in the case of the wave equation on $\R^d$ states the following:
given any $\eps > 0$, 
there exist a solution $u$ to \eqref{vNLW2}
and $t_\eps  \in (0, \eps) $ such that 
\begin{align*}
 \| (u, \dt u)(0) \|_{\H^s} < \eps \qquad \text{ but } \qquad \| (u, \dt u)(t_\eps)\|_{\H^s} > \eps^{-1}, 
%\label{NI1}
 \end{align*} 

\noi
where \[\H^s (\R^d) = H^s(\R^d) \times H^{s-1}(\R^d) .\]

In \cite{KC},  Kuan and \v{C}ani\'c
studied this issue
for vNLW \eqref{vNLW1b}.
Due to the presence of the viscous term in \eqref{vNLW1b},
which induces some smoothing property,  
one may  expect to have a different ill-posedness result
but this  was shown not to be the case.
More precisely, 
 Kuan and \v{C}ani\'c proved norm inflation 
for vNLW \eqref{vNLW1b}
in $\H^s(\R^d)$ for $0 < s < s_\text{crit}$
(for any odd integer   $p\geq 3$)
as in the case of the usual NLW.
Moreover, they  showed that the 
viscous contribution has the potential to slow down the speed of the norm inflation.
See \cite{KC} for details.
It is of interest to see if norm inflation 
in negative Sobolev spaces for the usual NLW
\cite{CCT, OOTz, FO}  
carries over to the viscous NLW.
See \cite{dROk}.

This norm inflation for vNLW \eqref{vNLW1b} shows that the equation
is ill-posed in $\H^s(\R^d)$ for $0 < s < s_\text{crit}$, 
showing that there is no hope in 
 studying well-posedness in this low regularity space
in a deterministic manner.

However, we can go beyond the limit of deterministic analysis
and consider our Cauchy problem with randomized initial data.
The area of nonlinear dispersive equations with randomized initial
data has become rather active in recent years \cite{BO96, BT1, CO, LM, BT3, BOP1, BOP2, Poc, OOP, BOP3}.
See also a survey paper \cite{BOP4} in this direction.

In fact, in \cite{KC}
Kuan and \v{C}ani\'c
considered the Cauchy problem \eqref{vNLW1b} 
with $p = 5$ and dimension $d = 2$, and proved almost sure local well-posedness
for {\emph{randomized initial data}} in $\H^s(\R^2)$ for $ s> -\frac 16$.
In this manuscript we extend this result in two important directions:
\begin{enumerate}
\item We prove {\emph{global}} rather than local well-posedness in the probabilistic sense for randomized initial data in $\H^s(\R^2)$ 
$s > s_\text{min}$ where $s_\text{min} = -1/5$; and
\item We extend the interval for the exponents $s$ from $s_\text{min} = -1/6$ to $s_\text{min} = -1/5$,
where the threshold $s_\text{min} = -1/5$ seems to be sharp, see Remark \ref{Remark3.6}(i).
\end{enumerate}
%Our main goal in this paper is 
%to extend globally in time these random solutions constructed in \cite{KC}.
%In the next subsection, we first go over a specific randomization that we consider
%for our problem.
Since the randomized initial data considered in this work are given in terms of Wiener randomization,
we provide a brief description of Wiener randomization next.

{\bf{Wiener randomization.}}
\label{SUBSEC:Wiener}
Let $\psi \in \mathcal{S}(\R^d)$ 
be such that $\supp \psi \subset [-1, 1]^d$,  $\psi(-\xi ) = \overline{\psi(\xi)}$,
and 
\[ \sum_{n \in \Z^d} \psi(\xi - n) \equiv 1 \quad \text{for all }\xi \in \R^d.\]

\noi
Then, any function $f$ on $\R^d$ can be written as
\begin{equation}
f = \sum_{n \in \Z^d} \psi(D-n) f,
\label{B1}
 \end{equation}

\noi
where $ \psi (D-n) $ denotes the Fourier multiplier operator
with symbol $\psi (\,\cdot\, -n)$. Hence, $\psi(D - n)f$ localizes $f$ in the frequency space around the frequency $n \in \mathbb{Z}^{d}$ over a unit scale. 
We recall a particular example of  Bernstein's inequality:
\begin{align}
\|\psi(D-n) f \|_{L^q(\R^d)}
\les \|\psi(D-n) f \|_{L^p(\R^d)}
\label{B2}
\end{align}

\noi
for any $ 1\leq p \leq q \leq \infty$. This classical inequality follows from the localization in the frequency space due to the compact support of $\psi$, and Young's convolution inequality (see, for example, Lemma 2.1 in \cite{LM}).

We now introduce  a randomization 
adapted to the uniform decomposition \eqref{B1}.
For $j = 0, 1$, 
let $\{g_{n,j}\}_{n \in \Z^d}$ be a sequence of mean-zero complex-valued random variables
on a probability space $(\Omega, \mathcal{F}, P)$
such that 
\begin{align}
g_{-n,j}=\overline{g_{n,j}}
\label{K1}
\end{align}
for all $n\in\Z^d$, $j=0,1$.
In particular,  $g_{0, j}$ is real-valued.
Moreover, we  assume that 
$\{g_{0,j}, \Re g_{n,j}, \Im g_{n,j}\}_{n\in\mathcal I, j=0,1}$ are independent,
where the index set $\mathcal{I}$ is defined by 
\begin{equation*}
\mathcal I=\bigcup_{k=0}^{d-1} \Z^k\times \Z_{+}\times \{0\}^{d-k-1}.
%\label{Index}
\end{equation*}

\noi
Note that $\Z^d = \mathcal I \cup (-\mathcal I)\cup \{0\}$.
Then, given a pair $(u_0, u_1)$ of  functions on $\R^d$, 
we  define the \emph{Wiener randomization} $(u_0^\omega, u_1^\omega)$
of $(u_0,u_1)$ by
\begin{align}
(u_0^\omega, u_1^\omega) 
& = 
\bigg(\sum_{n \in \Z^d} g_{n,0} (\omega) \psi(D-n) u_0,
\sum_{n \in \Z^d} g_{n,1} (\omega) \psi(D-n) u_1\bigg).
\label{R1}
\end{align}

\noi
See \cite{ZF, LM, BOP1, BOP2}.
We emphasize that thanks to \eqref{K1},  this randomization has the desirable property that if $u_0$ and $u_1$ are real-valued,
then their randomizations $u_0^\omega$ and $u_1^\omega$ defined in~\eqref{R1}
are also real-valued.

We make the following assumption on the 
 probability distributions
$\mu_{n,j}$ for $g_{n, j}$;
 there exists $c>0$ such that
\begin{equation}
\int e^{\gamma \cdot x}d\mu_{n,j}(x)\leq e^{c|\gamma|^2}, \quad j = 0, 1, 
\label{B3}
\end{equation}
	
\noindent
for all $n \in \Z^d$,  
(i) all $\gamma \in \R$ when $n = 0$,
and (ii)  all $\g \in \R^2$ when $n \in \Z^d \setminus \{0\}$.
Note that \eqref{B3} is satisfied by
standard complex-valued Gaussian random variables,
standard Bernoulli random variables,
and any random variables with compactly supported distributions.

It is easy to see that, if $(u_0,u_1) \in \H^s(\R^d)$
for some $s \in \R$,
then  the Wiener randomization $(u_0^\omega, u_1^\omega)$ is
almost surely  in $\H^s(\R^d)$.
Note that, under some non-degeneracy condition on the random variables $\{g_{n, j}\}$, 
 there is almost surely no gain from randomization
in terms of differentiability (see, for example, Lemma B.1 in \cite{BT1}).
Instead, the main feature of 
the Wiener randomization %$(u_0^\o, u_1^\o)$ in
 \eqref{R1}
is that $(u_0^\omega, u_1^\omega)$  behaves better in terms of integrability.
More precisely, if $u_j \in L^2(\R^d)$, $j=0,1$,
then  the randomized function $u_j^\omega$ is
almost surely  in $L^p(\R^d)$ for any finite $p \geq 2$.
See \cite{BOP1}.

Using the Wiener randomization of the initial data, we prove the main results of this paper,
which are the local and global almost sure existence of a unique solution
for the quintic vNWL in $\R^2$.
More precisely, we have the following main results. 

{\bf{Main results.}}
Fix  $(u_0, u_1) \in \H^s(\R^2)$
for some $s \in \R$ and 
let $(u_0^\o, u_1^\o)$ denote the Wiener randomization
of $(u_0, u_1)$
defined in \eqref{R1}.
Consider the following defocusing quintic vNLW on $\R^2$
with the random initial data:
\begin{align}
\begin{cases}
\dt^2 u - \Dl  u  + D \dt u  + u^5 = 0\\
(u, \dt u) |_{t = 0} = (u_0^\o, u_1^\o)
\end{cases}
\quad (x, t) \in \R^2\times \R_+.
\label{NLW1}
\end{align}

%In \cite{KC}, 
% Kuan and \v{C}ani\'c proved almost sure local well-posedness of 
%\eqref{NLW1}
%for $ s > -\frac 16$.
%The following theorem extends this almost sure local well-posedness result to $s > -\frac 15$.

\begin{theorem}\label{THM:LWP1}
Let $s > -\frac 15$.
Then, the quintic vNLW \eqref{NLW1}
is almost surely {\bf{locally}} well-posed
with respect to the Wiener randomization $(u_0^\o, u_1^\o)$
as  initial data.
More precisely, 
there exist $ C, c, \g>0$ and $0 < T_0 \ll1 $ such that
for each $0< T \le T_0$,
there exists a set $\O_T \subset \O$ with the following properties:

\smallskip
\begin{itemize}
\item[\textup{(i)}]
$\displaystyle P(\O_T^c) < C \exp\bigg(-\frac{c}{T^\g \|(u_0, u_1)\|_{\H^s}^2 }\bigg)$,
%\rule[-7mm]{0mm}{0mm}

\medskip

\item[\textup{(ii)}]
For each $\o \in \O_T$, there exists a \textup{(}unique\textup{)} 
local-in-time solution $u$
to \eqref{NLW1}
with $(u, \dt u) |_{t = 0} = (u_0^\o, u_1^\o)$
in the class 
\begin{align*}
V(t)(u_0^\o, u_1^\o) +  C([0,  T]; H^{s_0} (\R^2))
\cap  L^{5+\dl}([0, T];  L^{10}(\R^2))
%\label{class0}
\end{align*}

\noi
for some $s_0 = s_0(s)> \frac 35$, sufficiently close to $\frac 35$, and small $\dl> 0$ such that $s_0 \geq  1 - \frac 1{5+\dl} - \frac 2{10}$.
Here, $V(t)$ denotes the linear propagator 
for the viscous wave equation defined in \eqref{A1}.

\end{itemize}

\end{theorem}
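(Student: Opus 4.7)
The natural approach is the Da Prato--Debussche / Bourgain--Burq--Tzvetkov decomposition. Write $u = z + v$, where $z(t) := V(t)(u_0^\o, u_1^\o)$ is the free random evolution of the randomized data, and the deterministic remainder $v$ solves
\begin{align*}
\dt^2 v - \Dl v + D\dt v = -(z+v)^5, \qquad (v, \dt v)|_{t = 0} = (0, 0).
\end{align*}
The point of the splitting is that $z$ carries the rough random data but gains almost-sure spatial integrability from the Wiener randomization, while $v$ will be placed in the sub-critical space $H^{s_0}$ with $s_0 > \tfrac{3}{5}$, where a deterministic Strichartz--Schauder fixed-point argument closes.

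The first step is to establish probabilistic Strichartz estimates for $z$. Combining the sub-Gaussian moment bound \eqref{B3} with a Khintchine-type inequality, the unit-scale Bernstein inequality \eqref{B2} applied to the localizations $\psi(D-n)u_j^\o$, and the linear dispersive/dissipative estimates for the viscous propagator $V(t)$, one obtains
\begin{align*}
\bigl\|\,\|z\|_{L^{5+\dl}([0, T]; L^{10}(\R^2))}\,\bigr\|_{L^p(\O)} \les \sqrt{p}\; T^{\g/2}\|(u_0, u_1)\|_{\H^s}
\end{align*}
for all $p \geq 2$, whenever $s > -\tfrac{1}{5}$. A Chebyshev argument with $p$ optimized at the end converts this moment bound into the sub-Gaussian tail estimate in (i), taking $\O_T := \{\|z\|_{L^{5+\dl}_T L^{10}_x} \leq R\}$ for a small absolute constant $R > 0$. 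On $\O_T$, we then run a Banach contraction for $v$ in
\begin{align*}
X_T := C([0, T]; H^{s_0}(\R^2)) \cap L^{5+\dl}([0, T]; L^{10}(\R^2)),
\end{align*}
expanding the nonlinearity as $(z+v)^5 = \sum_{k=0}^{5} \binom{5}{k} z^{5-k}v^k$ and bounding each Duhamel integral via the Schauder smoothing of the viscous propagator (Lemma \ref{LEM:Sch}) together with the wave Strichartz estimate at the admissible pair $(5+\dl, 10)$ in $\R^2$---precisely encoded by the relation $s_0 \geq 1 - \tfrac{1}{5+\dl} - \tfrac{2}{10}$. The purely deterministic term $v^5$ is absorbed via the Sobolev embedding $H^{s_0}(\R^2) \embeds L^q$ for large $q$, and the mixed terms $z^{5-k} v^k$ with $1 \leq k \leq 4$ are handled by distributing integrability through H\"older in space and time against the a priori bound $\|z\|_{L^{5+\dl}_T L^{10}_x} \leq R$.

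The main obstacle is the multilinear estimate for the fully stochastic term $z^5$ (and the nearly stochastic $z^4 v$): at regularity $s$ just above $-\tfrac{1}{5}$, one must extract essentially every available gain from (a) the Wiener integrability improvement placing $z$ in $L^{10}_x$ at unit-frequency scale, (b) the spatial smoothing contributed by the viscous factor $D\dt$ through the Schauder estimate, and (c) the sub-critical wave Strichartz admissibility on the pair $(5+\dl, 10)$. The threshold $s > -\tfrac{1}{5}$ is precisely the regime in which these three gains combine to return $(z+v)^5$ through the Duhamel operator back to $X_T$; the sharpness noted in Remark \ref{Remark3.6}(i) reflects that any smaller value of $s$ would break the Strichartz range needed even to make sense of $\|z\|_{L^{5+\dl}_T L^{10}_x}$.
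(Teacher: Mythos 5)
Your proposal follows the same route as the paper: the Bourgain--Burq--Tzvetkov decomposition $u = z + v$, a probabilistic Strichartz estimate for $z$ in $L^{5+\dl}_T L^{10}_x$ proved via Khintchine's inequality plus unit-scale Bernstein plus the Schauder bound for $P(t)$, and a Banach contraction for the residue $v$ in $C_T H^{s_0} \cap L^{5+\dl}_T L^{10}_x$. Both the solution space and the identification of where the threshold $s > -\tfrac15$ enters (temporal integrability of $t^{-\al}$ with $\al = -s$ in $L^{5+\dl}_t$) agree with the paper.

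One slip worth correcting: you propose to absorb the purely deterministic term $v^5$ ``via the Sobolev embedding $H^{s_0}(\R^2) \hookrightarrow L^q$ for large $q$.'' This does not work at the claimed regularity. You need $v^5 \in L^1_T L^2_x$, hence $v \in L^{5+}_T L^{10}_x$, and $H^{s_0}(\R^2) \hookrightarrow L^{10}(\R^2)$ would require $s_0 \ge \tfrac{4}{5}$; but $s_0 = 1 - \tfrac{1}{5+\dl} - \tfrac{2}{10}$ is only slightly above $\tfrac{3}{5}$, so the embedding lands you in roughly $L^{5}_x$, far short. The mechanism that makes $v^5$ closable is precisely the homogeneous Strichartz estimate for the viscous propagator at the pair $(5+\dl, 10)$ (Lemma \ref{LEM:Str}), which exchanges a loss of $L^\infty_t$ for a gain in spatial integrability that Sobolev embedding alone cannot provide; this is why the $L^{5+\dl}_T L^{10}_x$ norm is part of the fixed-point space in the first place. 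Since you already put that norm in $X_T$ and cite the Strichartz estimate, the argument still closes --- but the attribution to Sobolev embedding is wrong and would mislead a reader into thinking the Strichartz component of $X_T$ is dispensable. With that corrected, your outline matches the paper's proof via Theorem \ref{THM:LWP2}.
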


\begin{remark}\rm 

Let $k_0$ be the smallest integer such that $k_0 \ge T_0^{-1}$.
Then, by setting
\[ \Si = \bigcup_{k = k_0}^\infty \O_{k^{-1}}, \]

\noi
we have:
\begin{enumerate}
\item $P(\Si) = 1$, and 
\item for each $\o\in \Si$, 
there exist a (unique) local-in-time  solution~$u$
to \eqref{NLW1} 
with $(u, \dt u) |_{t = 0} = (u_0^\o, u_1^\o)$
on the time interval $[0, T_\o]$
for some $T_\o > 0$.
More specifically, 
for $\o \in \O_{k^{-1}}$, 
the random local  existence time $T_\o$ is given by $T_\o = k^{-1}$. 
\end{enumerate}
\end{remark}

The proof of 
Theorem \ref{THM:LWP1} is based on 
 the first order expansion \cite{BO96, BT1, CO, BOP1, BOP2, KC}:
\begin{align}
u = z + v, 
\label{exp1}
\end{align}

\noi
where $z = z^\o$ denotes the random {\emph{linear}}  solution given by 
\begin{align}
z(t) = V(t) (u_0^\o, u_1^\o).
\label{z1}
\end{align}

\noi
Then, 
\eqref{NLW1} can be rewritten as
\begin{align}
\begin{cases}
\dt^2 v  -  \Dl  v  + D \dt v  + (v + z)^5 =0\\
(v, \dt v) |_{t = 0} = (0, 0)
\end{cases}
\label{NLW5}
\end{align}

\noi
and we study the fixed point problem \eqref{NLW5} for $v$.
In contrast with \cite{KC}, where the proof of almost sure local well-posedness
was based on the Strichartz estimate 
for the viscous wave equation %(Lemma~\ref{LEM:Str}) 
with the diagonal Strichartz space $L^6([0, T]; L^6(\R^2))$,
we prove Theorem \ref{THM:LWP1}, 
using the Schauder estimate for the Poisson kernel (Lemma \ref{LEM:Sch})
and work in a non-diagonal space
$ L^{5+\dl}([0, T];  L^{10}(\R^2))$.
This was important to obtain higher regularity of $\vec{v}$ ($\H^1$ regularity), which allowed us to 
show boundedness of the energy, not otherwise attainable using Strichartz estimates. 
See Section \ref{SEC:LWP}
for details.

\medskip

Once local almost sure well-posedness is established we obtain the following
global almost sure well-posedness result:

\begin{theorem}\label{THM:GWP1}
Let $s > -\frac 15$.
Then, the defocusing quintic vNLW \eqref{NLW1}
is almost surely globally well-posed
with respect to the Wiener randomization $(u_0^\o, u_1^\o)$
as  initial data.
More precisely,
there exists a set $\Sigma \subset \O$
with $P(\Sigma) = 1$
such that,
for each $\o \in \Sigma$, there exists a \text{(}unique\textup{)} global-in-time  solution 
$u$
to \eqref{NLW1}
with $(u, \dt u) |_{t = 0} = (u_0^\o, u_1^\o)$
in the class
\begin{align*}
V(t)(u_0^\o, u_1^\o) +  C(\R_+; H^{s_0} (\R^2))
\end{align*}

\noi
for some $s_0 > \frac 35$.

\end{theorem}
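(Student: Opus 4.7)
\emph{Proof proposal.} The plan is to promote the local solution from Theorem~\ref{THM:LWP1} to a global one by deriving an a priori energy bound on the nonlinear part $v$ of the Bourgain decomposition. Write $u = z + v$ as in \eqref{exp1}, with $z(t) = V(t)(u_0^\o, u_1^\o)$ and $v$ solving \eqref{NLW5} with zero initial data. The local theory yields $v \in C([0,T_0]; H^{s_0})$ for some $s_0 > \tfrac35$, and since the Cauchy data for $v$ vanish (hence lie in $\H^1$), a persistence-of-regularity argument via a second contraction in a higher-regularity resolution space (using the Schauder estimate of the proof of Theorem~\ref{THM:LWP1}, together with the improved integrability of $z$) upgrades this to $(v, \dt v) \in C([0,T_0]; \H^1)$ on the same random interval.

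With $v$ in the energy space, introduce
\begin{align*}
E(v)(t) = \tfrac12 \|\dt v(t)\|_{L^2}^2 + \tfrac12 \|\nabla v(t)\|_{L^2}^2 + \tfrac16 \|v(t)\|_{L^6}^6.
\end{align*}
Pairing \eqref{NLW5} with $\dt v$ in $L^2(\R^2)$ and using the defocusing structure together with the positivity of the dissipative term produces the differential identity
\begin{align*}
\frac{d}{dt} E(v)(t) + \|D^{1/2} \dt v(t)\|_{L^2}^2 = \int_{\R^2} \bigl(v^5 - (v+z)^5 \bigr) \dt v \, dx.
\end{align*}
Expanding $(v+z)^5 - v^5 = \sum_{k=1}^{5} \binom{5}{k} v^{5-k} z^k$ splits the right-hand side into five mixed terms $\int v^{5-k} z^k \dt v \, dx$. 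Each is estimated by H\"older's inequality in $x$ (pairing $\|z\|_{L^{p_k}}^k$ against $\|v\|_{L^{q_k}}^{5-k}$ and $\|\dt v\|_{L^2}$), followed by the Sobolev embedding $H^{1-\varepsilon}(\R^2) \hookrightarrow L^{q_k}$ to re-express $\|v\|_{L^{q_k}}^{5-k}$ by a controlled power of $\|v\|_{H^1}$, then by Young's inequality to keep the dependence on $E(v)$ at most \emph{linear}. For the worst term $k=5$, a direct Cauchy--Schwarz gives $\int z^5 \dt v \les \|z\|_{L^{10}_x}^{10} + E(v)$, which is already in Gronwall form.

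Summing these bounds, one arrives at
\begin{align*}
\frac{d}{dt}\bigl(1 + E(v)(t) + \|v(t)\|_{L^2}^2\bigr) \le C \bigl(1 + \Phi(z)(t)\bigr)\bigl(1 + E(v)(t) + \|v(t)\|_{L^2}^2\bigr),
\end{align*}
where $\Phi(z)(t)$ is a finite sum of pointwise-in-$t$ powers of spatial $L^{p_k}$-norms of $z(t)$. Invoking Gronwall yields a bound on $E(v)$ in terms of $\int_0^T \Phi(z)(t)\, dt$, which, combined with the fact that the local existence time for the energy-subcritical (since $s_{\text{crit}} = \tfrac12 < 1$) quintic vNLW depends only on the $\H^1$-norm, rules out finite-time blow-up and iterates the local theory to all of $\R_+$. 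Defining, for each $k \in \N$, the event $\O_k \subset \O$ on which $\int_0^{k} \Phi(z)(t)\, dt < \infty$, the desired $\Sigma = \bigcap_{k \ge 1} \O_k$ has $P(\Sigma) = 1$ on account of large-deviation estimates on the Wiener randomization \eqref{R1} (via the moment bound \eqref{B3} and Bernstein's inequality \eqref{B2}), which guarantee that every mixed $L^{p}_{t,x}$-norm of $z$ on a finite time window is almost surely finite.

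The main obstacle will be to close the energy estimate with the correct power counting: the high-$k$ terms, especially $\int v z^4 \dt v$ and $\int z^5 \dt v$, force us to control $z$ in rather high spatial Lebesgue exponents such as $L^{10}_x$, which the $\H^s$ regularity of the initial data with $s > -\tfrac15$ does not afford deterministically. The gain comes solely from the Wiener randomization together with the dissipative smoothing in $V(t)$; checking that the threshold $s_{\min} = -\tfrac15$ is exactly what these probabilistic/dissipative gains can tolerate (cf.~Remark~\ref{Remark3.6}(i)) is the delicate bookkeeping step of the argument.
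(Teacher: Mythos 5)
Your overall framework mirrors the paper's (first-order expansion, gain of $\H^1$ regularity for $v$ via the Poisson-kernel smoothing, an energy bound plus Gronwall, and a Borel--Cantelli argument over target times), but the central energy estimate does not close the way you claim. You identify $k=5$, i.e.\ $\int z^5 \dt v$, as the ``worst'' term, whereas it is actually the easiest since it is linear in $v$. The genuinely problematic term is $k=1$, namely $5\int_{\R^2} z\, v^4\, \dt v\, dx$. Any H\"older/Sobolev estimate that places $\dt v$ in $L^2_x$ forces you onto $\|v\|_{L^8_x}^4$, and Gagliardo--Nirenberg in $\R^2$ gives $\|v\|_{L^8}^4 \les \|\nabla v\|_{L^2}\|v\|_{L^6}^3$, so that
\begin{align*}
\bigg|\int_{\R^2} z\, v^4\, \dt v\, dx\bigg| \les \|z\|_{L^\infty_x}\, E(\vec v)^{3/2}.
\end{align*}
This is superlinear in $E(\vec v)$, and Young's inequality cannot lower the power because the excess lives on $v$, not on $z$; a differential inequality of the form $\frac{d}{dt}E \les \Phi(z)\, E^{3/2}$ does not give a global bound. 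The dissipative term $-\|D^{1/2}\dt v\|_{L^2}^2$ does not rescue this either (the paper simply discards it in the energy argument). This is precisely the point at which the paper notes that the Burq--Tzvetkov Gronwall argument, which works for the cubic nonlinearity, \emph{fails} in the quintic case.

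The missing ingredient is the Oh--Pocovnicu integration-by-parts trick in time: one rewrites $5 z v^4 \dt v = z\, \dt(v^5)$ and integrates by parts in $t$. This produces a boundary term $-\int_{\R^2} z\, v^5\, dx\big|_{T_0}^t$, controlled via Young's inequality with exponents $6$ and $6/5$ by $\eps_0^{-6}\|z\|_{L^6_x}^6 + \eps_0^{6/5} E(\vec v)$ with the small $\eps_0^{6/5}E(\vec v)$ contribution absorbed into the left-hand side, and a bulk term $\int_{T_0}^t\int_{\R^2} \dt z\, v^5\, dx\, dt'$. The bulk term has an extra derivative on $z$ (via $\dt z = \jb{\nb}\wt z$), and closing it linearly in $E(\vec v)$ requires a Littlewood--Paley decomposition redistributing that derivative onto $v^5$ together with an $L^\infty_t L^\infty_x$ probabilistic bound on $\jb{\nb}^{s_1}\wt z$ for some $s_1 > \tfrac12$, which is the content of Proposition \ref{PROP:PS2}. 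Without this maneuver the energy estimate is genuinely superlinear and your proposed Gronwall step does not go through.
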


Here, the uniqueness holds in the following sense.
Given any $t_0 \in \R_+$, there exists a random time interval $I(t_0, \o) \ni t_0$
such that the solution $u = u^\o$ constructed in Theorem~\ref{THM:GWP1}
is unique in 
\begin{align*}
V(t)(u_0^\o, u_1^\o) +  C(I(t_0, \o); H^{s_0} (\R^2))
\cap  L^{5+\dl}(I(t_0, \o);  L^{10}(\R^2)), 
\end{align*}

\noi
where $s_0 >\frac 35$ and $\dl > 0$ are as in Theorem \ref{THM:LWP1}.

The main idea of the proof of Theorem \ref{THM:GWP1} is based
on a probabilistic energy estimate, see e.g.,  \cite{BT1, OP}.
With $\vec v = (v, \dt v)$, 
a smooth solution $\vec v $ to the 
defocusing vNLW \eqref{NLW5}
(with $z \equiv 0$ and general initial data)
satisfies monotonicity of the energy (for the usual NLW):
\begin{align}
E(\vec v) = \frac 12 \int_{\R^2} |\nb v|^2 dx + \frac 12 \int_{\R^2} (\dt v)^2 dx
+ \frac 16 \int_{\R^2} v^6 dx .
\label{E0}
\end{align}

\noi
Indeed a simple integration by parts with \eqref{NLW5} (with $z \equiv 0$) shows
\begin{align*}
\dt E(\vec v) = - \|\dt v\|_{\dot H^\frac{1}{2}}^2 \leq 0.
\end{align*}

\noi
For our problem, we proceed with 
 the first order expansion \eqref{exp1}
 and thus 
the residual term $v = u-z$ only satisfies the perturbed vNLW \eqref{NLW5}.
As such,  the monotonicity of the energy $E(\vec v)$ no longer holds.
Nonetheless, by using the time integration by parts trick introduced
by Oh and Pocovnicu  \cite{OP}, 
we establish a Gronwall type estimate 
for $E(\vec v)$ to prove almost sure global well-posedness.

One important point to note is that as it is written, 
the local theory (Theorem \ref{THM:LWP1}) does not provide
a sufficient regularity (i.e.~$\H^1(\R^2)$) for $\vec v$ to guarantee finiteness of the energy $E(\vec v)$.
By using the Schauder estimate (Lemma \ref{LEM:Sch}), however, 
we can show that the residual term $\vec v(t)$ is smoother 
and indeed lies in $\H^1(\R^2)$
for strictly positive times.
It is at this step that the dissipative nature of the equation plays an important role
in this globalization argument.
See Subsection~\ref{SUBSEC:GWP1}
for details.

%\begin{remark}\label{REM:1}\rm
We conclude this introduction by a few remarks. 

First, we expect that probabilistic {\emph{continuous dependence}},
a notion introduced by Burq and Tzvetkov in \cite{BT3}, see also \cite{Poc},
can be extended from the range $s > -\frac 16$, proved by Kuan and \v{C}ani\'c in \cite{KC},
to the entire range $s > -\frac 15$. We omit details here.

%similar to that of Kuan and \v{C}ani\'c in \cite{KC},
%extends to the range $s > -\frac 15$
%In \cite{KC},  Kuan and \v{C}ani\'c
%also proved a kind of probabilistic continuous dependence, 
%a notion introduced by Burq and Tzvetkov \cite{BT3}.
%See also  \cite{Poc}.
%While we expect that this probabilistic continuous dependence extends
%to the range $s > -\frac 15$, we omit details.

Secondly,  we note that it is also possible to establish almost sure global well-posedness
with respect to the Wiener randomization
for the defocusing vNLW~\eqref{vNLW1b} on $\R^2$
with a {\emph{general defocusing nonlinearity}} $|u|^{p-1} u$ for $p < 5$, 
provided that $s >  - \frac 1p$.
For $p \leq 3$, a straightforward Gronwall type argument
by Burq and Tzvetkov \cite{BT3} applies.
See also \cite{Poc}.
For $3 < p < 5$, one can adapt the argument in Sun and Xia \cite{SX}
which interpolates the $p = 3$ case \cite{BT3}
and the $p = 5$ case \cite{OP} 
in the context of the usual NLW.
See Remark \ref{REM:2}\,(ii)
for a discussion on the $p > 5$ case.

Finally, we remark that the derivation discussed above but with a {\emph{random external forcing}} 
$F_{\text{ext}}$ in \eqref{pressuredyn}, 
leads to a stochastic version of  vNLW.
In \cite{KC2}, Kuan and \v{C}ani\'c
studied the following stochastic vNLW  on $\R^2$ with a multiplicative space-time white noise forcing:
\begin{align}
\dt^2 u - \Dl  u  + 2\mu D \dt u   = F(u)  \xi , 
\label{vNLWx}
\end{align}
where $\xi$ denotes a space-time white noise on $\R^2\times \R_+$.
Under a suitable assumption on $F$, 
they proved global well-posedness of \eqref{vNLWx}.
We also note that well-posedness of stochastic vNLW with a (singular) additive noise
on the two-dimensional torus $\T^2= (\R/\Z)^2$ was recently considered in \cite{LO, Liu}.
%\end{remark}

The results of this work shed a new light on this active and important research area by showing
the first global well-posedness result for a prototype fluid-structure interaction problem with randomly perturbed rough initial data
in the case when the corresponding deterministic problem is ill-posed.
%, thereby showing the smoothing effects of random perturbations
%of rough initial data for this prototype model of fluid-structure interaction.

We begin by presenting the estimates that will be used to obtain the local and global almost sure well-posedness results.

\section{Basic estimates} 

In this section, we go over the deterministic 
and probabilistic linear estimates that will be the basis of the proofs of the main results. 
For this purpose, we introduce the following {\bf{notation}}:

\begin{itemize}
\item
We write $ A \les B $ to denote an estimate of the form $ A \leq CB $ for some $C > 0$.
Similarly, we write  $ A \sim B $ to denote $ A \les B $ and $ B \les A $ and use $ A \ll B $ 
when we have $A \leq c B$ for small $c > 0$.
\item We define the operators $D$ and $\jb{\nb}$ by  setting 
\begin{align}
D = |\nb| = \sqrt{-\Dl} \qquad \text{and}\qquad
\jb{\nb} = \sqrt{ 1- \Dl}, 
\label{nb}
\end{align}
viewed as Fourier multiplier operators
with multipliers $|\xi|$ and $\jb{\xi}$, respectively.
\end{itemize}

\subsection{Linear operators and the relevant linear  estimates}

By writing \eqref{NLW1} in the Duhamel formulation, we have
\begin{align*}
u(t) = V(t) (u_0^\o, u_1^\o) - \int_0^t W(t - t') u^5(t') dt', 
%\label{NLW2}
\end{align*}

\noi
where the linear propagator $V(t)$ is defined by 
\begin{align}
V(t) (u_0, u_1)
= e^{- \frac{D}{2}t} \bigg(\cos \big(\tfrac{\sqrt{3}}{2} Dt\big) 
+ \frac{1}{\sqrt{3}}\sin \big(\tfrac{\sqrt{3}}{2} Dt\big) \bigg)u_0
+ e^{- \frac{D}{2}t}\frac{\sin \big(\tfrac{\sqrt{3}}{2} Dt\big)}{\tfrac{\sqrt{3}}{2} D} u_1,
\label{A1}
\end{align}

\noi
and $W(t)$ is defined by 
\begin{align}
W(t) = e^{- \frac{D}{2}t} \frac{\sin \big(\tfrac{\sqrt{3}}{2} Dt\big)}{\tfrac{\sqrt{3}}{2} D}.
\label{A2}
\end{align}

\noi
By letting 
\begin{align}
P(t) = e^{-\frac{D}{2}t}
\label{A2a}
\end{align}

\noi
 denote the Poisson kernel (with a parameter $\frac{t}{2}$)
and 
\begin{align*}
S(t) =  \frac{\sin \big(\tfrac{\sqrt{3}}{2} Dt\big)}{\tfrac{\sqrt{3}}{2} D}, 
%\label{A3}
\end{align*}

\noi
we have 
\begin{align*}
W(t) = P(t)\circ  S(t) .
%\label{A4}
\end{align*}

\noi
By defining $U(t)$ by 
\begin{align*}
U (t) (u_0, u_1)
=  \Big(\cos \big(\tfrac{\sqrt{3}}{2} Dt\big) 
+ \frac{1}{\sqrt{3}}\sin \big(\tfrac{\sqrt{3}}{2} Dt\big) \Big)u_0
+ \frac{\sin \big(\tfrac{\sqrt{3}}{2} Dt\big)}{\tfrac{\sqrt{3}}{2} D} u_1, 
%\label{A5}
\end{align*}

\noi
we have 
\begin{align}
V(t) = P(t) \circ U(t) .
\label{A6}
\end{align}

We first recall 
the Strichartz estimates for the homogeneous linear viscous wave equation
(Theorem 3.2 in \cite{KC}).
Given  $\s > 0$, 
we say that a pair $(q, r)$ is $\s$-admissible 
if $2 \leq q, r \leq \infty$ with $(q, r, \s) \ne (2, \infty, 1)$ 
and 
\begin{align}
\frac 2q + \frac{2\s}{r} \leq \s.
\label{admis1}
\end{align}

\begin{lemma}\label{LEM:Str}
%Given $0 < s < 1$,
Given $\s > 0$, let $(q, r)$ 
be a $\s$-admissible pair with $r < \infty$.
Then, a solution $u$ to the homogeneous linear wave equation on $\R^d$:
\begin{align*}
\begin{cases}
\dt^2 u - \Dl  u  + D \dt u    = 0\\
(u, \dt u) |_{t = 0} = (u_0, u_1)
\end{cases}
\end{align*}

\noi
satisfies
\begin{align}
\| (u, \dt u) \|_ {L^\infty(\R_+;  \H^s_x(\R^d)) } + 
 \| u  \|_{L^q(\R_+; L^r_x(\R^d))}
\lesssim 
\|(u_0, u_1) \|_{\H^s(\R^d)}, 
\label{hStr}
\end{align}

\noi
provided that the following scaling condition holds:
\begin{align}
 \frac{1}{q} + \frac dr  = \frac d2-  s.
\label{admis2}
\end{align}

\end{lemma}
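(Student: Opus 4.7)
The plan is to exploit the factorization $V(t) = P(t) \circ U(t)$ from \eqref{A6}, which separates the dissipative Poisson semigroup $P(t) = e^{-tD/2}$ from an $L^2$-isometric wave-type factor $U(t)$. All gains in space–time integrability will come from $P(t)$, while $U(t)$ will be handled by the spectral theorem on $L^2$-based Sobolev spaces.

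\textbf{Step 1: The energy bound.} The operator $U(t)$ is a Fourier multiplier whose symbol is bounded by a constant uniformly in $t \ge 0$, so $U(t)$ maps $\H^s(\R^d)$ into $L^\infty(\R_+; H^s(\R^d))$, with the time-derivative component controlled in $H^{s-1}$. Since $|e^{-t|\xi|/2}|\le 1$, the Poisson semigroup $P(t)$ is a contraction on every $H^\sigma$. Composing, we obtain $\|(u,\dt u)\|_{L^\infty_t \H^s_x}\lesssim \|(u_0,u_1)\|_{\H^s}$. (For the $\dt u$ component one must include the extra factor of $D$ coming from $\dt P(t)$ and $\dt U(t)$; the scaling is consistent with $H^{s-1}$.)

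\textbf{Step 2: The space–time bound via Littlewood--Paley and Bernstein.} Decompose $u_0$ (and similarly $u_1$) dyadically into $P_N u_0$, with $P_N$ a Littlewood--Paley projector onto $|\xi|\sim N$. Since $U(t)$ commutes with $P_N$ and is $L^2$-bounded, it suffices to control $P(t) P_N f$ in $L^q_t L^r_x$ for $f\in L^2$. On the frequency support of $P_N$, the Poisson symbol obeys $|e^{-t|\xi|/2}|\lesssim e^{-cNt}$, which gives
\begin{equation*}
\|P(t) P_N f\|_{L^q_t(\R_+)} \lesssim N^{-1/q}\|P_N f\|_{L^r_x}.
\end{equation*}
Bernstein's inequality yields $\|P_N f\|_{L^r_x}\lesssim N^{d(1/2-1/r)}\|P_N f\|_{L^2_x}$, so altogether
\begin{equation*}
\|P(t) P_N f\|_{L^q_t L^r_x} \lesssim N^{d/2 - 1/q - d/r}\|P_N f\|_{L^2_x} = N^{s}\|P_N f\|_{L^2_x}
\end{equation*}
by the scaling condition \eqref{admis2}. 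Summing over dyadic scales $N\ge 1$ using the Littlewood--Paley square function and Minkowski's inequality (valid since $q,r\ge 2$) gives $\|u\|_{L^q_t L^r_x}\lesssim \|(u_0,u_1)\|_{\H^s}$.

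\textbf{Main obstacle and role of $\sigma$-admissibility.} The delicate step is exchanging the $\ell^2_N$-summation with the $L^q_t L^r_x$ norm. For $r=2$ this is free (Plancherel), while for $r>2$ Minkowski costs us and we need an interpolation argument between the endpoint $(q,r)=(\infty,2)$ (trivial energy bound) and a higher-integrability endpoint where one accepts a weaker time decay. This interpolation, together with the dyadic time integration $\|e^{-cNt}\|_{L^q_t}\sim N^{-1/q}$, is precisely what the $\sigma$-admissibility constraint $\frac{2}{q}+\frac{2\sigma}{r}\le\sigma$ encodes: for any prescribed $\sigma>0$, one has enough flexibility to convert dissipative decay in time into spatial integrability in $r$ while preserving summability in $N$. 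A cleaner route, in case the dyadic argument falls short for endpoint pairs, is to invoke the abstract Keel--Tao machinery with the dispersive estimate $\|P(t)f\|_{L^r}\lesssim t^{-d(1/p-1/r)}\|f\|_{L^p}$ for the Poisson kernel (obtained by Young's inequality from $\|P_t\|_{L^a}\sim t^{-d(1-1/a)}$), from which the admissibility condition emerges naturally.
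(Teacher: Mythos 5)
The paper gives no proof of this lemma; it is quoted from Theorem~3.2 of~\cite{KC}, so there is no internal argument to compare yours against. That said, your proposal contains a genuine gap. The claim in Step~1 that $U(t)$ ``is a Fourier multiplier whose symbol is bounded by a constant uniformly in $t \ge 0$'' (and hence the ``$L^2$-bounded'' reduction in Step~2) is false: the $u_1$-component of $U(t)$ has multiplier $\frac{\sin(\frac{\sqrt3}{2}|\xi|t)}{\frac{\sqrt3}{2}|\xi|} \sim \min(t, |\xi|^{-1})$, which is unbounded in $t$ for any fixed $\xi$. The division by $D$ inside $U(t)$ is precisely what produces the $s \mapsto s-1$ shift between the two components of $\H^s$, so it cannot be absorbed into a time-uniformly bounded multiplier. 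Consequently Step~1 does not yield the $L^\infty_t \H^s_x$ bound, and the reduction in Step~2 to ``$P(t) P_N f$ with $f \in L^2$'' breaks for the $u_1$ piece: the Poisson decay $e^{-cNt}$ and the wave-factor size $\min(t, N^{-1})$ must be tracked together rather than cleanly separated, and doing so on the $N$th block gives $\lesssim N^{-1 - 1/q + d(\frac12 - \frac1r)} \|P_N u_1\|_{L^2} = N^{s-1}\|P_N u_1\|_{L^2}$, which sums to $\|u_1\|_{\dot H^{s-1}}$. Your version would give $N^s \|P_N u_1\|_{L^2}$, off by a full power of $N$.

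There are secondary issues as well: the dyadic argument naturally produces the homogeneous norms $\dot H^s \times \dot H^{s-1}$, not the inhomogeneous $\H^s$ written in the statement, and you sum only over $N \ge 1$ without addressing the low-frequency block; the operator bound $\|P(t)P_N\|_{L^r \to L^r} \lesssim e^{-cNt}$ requires a kernel estimate for the frequency-localized Poisson kernel, not merely the pointwise symbol bound; and the role assigned to $\sigma$-admissibility is asserted rather than derived. In fact one can check that $\sup_{t>0}\big|e^{-|\xi|t/2}\frac{\sin(\frac{\sqrt3}{2}|\xi|t)}{\frac{\sqrt3}{2}|\xi|}\big| \sim |\xi|^{-1}$, which is not $\lesssim \jb{\xi}^{-1}$ at low frequencies; so the $L^\infty_t \H^s_x$ part of \eqref{hStr} in the inhomogeneous form cannot hold for small $|\xi|$ when $s < 1$. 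This is a strong indication that the natural home for both the argument and the statement is the homogeneous scale, and a correct proof should make that explicit rather than pass over the low-frequency contribution.
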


\begin{remark}\rm
In view of the scaling condition \eqref{admis2}, 
if a pair $(q, r)$ satisfies \eqref{admis2}
for some $s \geq 0$, 
then it is $\s$-admissible with $\s = d$.
\end{remark}

\begin{remark}\rm
We remark that the bounding constant in the estimate \eqref{hStr} depends only on $\sigma > 0$. See \cite{KC} for details.
\end{remark}

\begin{remark}\rm
In the usual Strichartz estimates for the homogeneous wave equation, one must impose an additional restriction on $s$ that $0 \le s \le 1$. This is not present in the corresponding estimate for the homogeneous viscous wave equation in Lemma \ref{LEM:Str}. Although a restriction on $s$ is not explicitly stated in Lemma \ref{LEM:Str}, $s$ does have a limited range of possible values, due to the constraints imposed by the fact that $\sigma > 0$, $2 \le q, r \le \infty$, with $(q, r, \sigma) \ne (2, \infty, 1)$ in \eqref{admis1}, and the scaling condition \eqref{admis2}. The exponent $s$ can take values in the range 
$-\frac 12 < s \le \frac d2$ depending on the choice of parameters. One attains the lower end of the range by taking $q = 2$ and taking $r$ arbitrarily close to $2$, while one attains the upper endpoint of the range by taking $q, r = \infty$ for $s = \frac d2$.
\end{remark}

Next, we state a Schauder-type estimate for the Poisson kernel $P(t)$, 
which allows us to exploit the dissipative nature of
the dynamics.

\begin{lemma}\label{LEM:Sch}
Let $ 1 \leq p \leq q \leq \infty$
and $\al \geq 0$.
Then, we have
\begin{align}
\| D^\al P(t) f\|_{L^q(\R^d)} \les
t^{- \al - d(\frac{1}{p} - \frac{1}{q})}\| f\|_{L^p(\R^d)} 
\label{P1}
\end{align}

\noi
for any $t > 0$.
\end{lemma}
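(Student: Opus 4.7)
The plan is to realize $D^\al P(t)$ as convolution with an explicit kernel and then exploit scaling together with Young's convolution inequality. Define $K^\al_t = \mathcal{F}^{-1}\bigl(|\xi|^\al e^{-\frac{t}{2}|\xi|}\bigr)$, so that $D^\al P(t) f = K^\al_t * f$. A change of variables $\xi \mapsto \xi/t$ immediately yields the scaling identity $K^\al_t(x) = t^{-d-\al}\, K^\al_1(x/t)$, and hence, for every $1 \le r \le \infty$,
\begin{equation*}
\|K^\al_t\|_{L^r(\R^d)} = t^{-\al - d(1 - \frac{1}{r})} \|K^\al_1\|_{L^r(\R^d)}.
\end{equation*}

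The heart of the matter is then verifying that $K^\al_1 \in L^r(\R^d)$ for every $1 \le r \le \infty$. To this end I would invoke the subordination formula
\begin{equation*}
e^{-\frac{1}{2}|\xi|} = \frac{1}{4\sqrt{\pi}} \int_0^\infty \ld^{-3/2} e^{-\frac{1}{16\ld}}\, e^{-\ld|\xi|^2}\, d\ld,
\end{equation*}
which expresses $K^\al_1$ as a superposition of fractional derivatives of Gaussian kernels $G^\al_\ld := \mathcal{F}^{-1}(|\xi|^\al e^{-\ld|\xi|^2})$. A scaling argument together with the known decay of $G^\al_\ld$ gives $|G^\al_\ld(x)| \les \ld^{-(d+\al)/2}(1 + |x|/\sqrt{\ld})^{-d-\al-1}$. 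Splitting the $\ld$-integral at $\ld \sim 1 + |x|^2$ yields the pointwise bound $|K^\al_1(x)| \les (1 + |x|)^{-d-\al-1}$, which lies comfortably in $L^r(\R^d)$ for every $1 \leq r \leq \infty$ since $\al \geq 0$.

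Finally, given $1 \leq p \leq q \leq \infty$, pick $r \in [1, \infty]$ satisfying $\frac{1}{r} + \frac{1}{p} = 1 + \frac{1}{q}$ (such $r$ exists precisely because $p \le q$) and apply Young's convolution inequality:
\begin{equation*}
\|D^\al P(t) f\|_{L^q} = \|K^\al_t * f\|_{L^q} \les \|K^\al_t\|_{L^r} \|f\|_{L^p} \les t^{-\al - d(\frac{1}{p} - \frac{1}{q})} \|f\|_{L^p},
\end{equation*}
where the last inequality uses the scaling computed above together with $1 - \frac{1}{r} = \frac{1}{p} - \frac{1}{q}$. The one nontrivial ingredient is the pointwise decay estimate on $K^\al_1$ for non-integer $\al$; this is exactly where the subordination formula does the real work, reducing the problem to a routine Gaussian kernel calculation.
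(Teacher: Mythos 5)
Your proof follows the paper's scheme exactly: realize $D^\al P(t)$ as convolution with $K^\al_t = D^\al K_t$, use the scaling $K^\al_t(x) = t^{-d-\al}K^\al_1(x/t)$ (this is precisely the paper's \eqref{P4}), then apply Young's inequality with $\frac{1}{r} = 1 - \frac{1}{p} + \frac{1}{q}$. You are also right that the only remaining issue is $K^\al_1 \in L^r(\R^d)$ for all $1 \le r \le \infty$ --- the paper verifies this only for $\al = 0$ via the explicit Poisson kernel, and for $\al > 0$ quietly assumes it with the phrase ``proceeding as before,'' so your attempt to supply the missing integrability via subordination addresses a real omission. However, the claimed decay $|G^\al_\ld(x)| \lesssim \ld^{-(d+\al)/2}(1+|x|/\sqrt{\ld})^{-d-\al-1}$, and consequently $|K^\al_1(x)| \lesssim (1+|x|)^{-d-\al-1}$, overstates the tail by one power of $|x|$ for generic $\al$. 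The inverse Fourier transform of $|\xi|^\al\phi(\xi)$ with $\phi$ Schwartz, $\phi(0)\neq 0$, behaves like $c_{d,\al}\,\phi(0)\,|x|^{-d-\al}$ at infinity whenever $\al$ is not a nonnegative even integer (the singularity of $|\xi|^\al$ at the origin controls the tail), so $G^\al_1$, and therefore $K^\al_1$, decays only like $(1+|x|)^{-d-\al}$. Your faster rate holds precisely when $\al$ is a nonnegative even integer, in which case $|\xi|^\al$ is a polynomial and the first non-smooth term in the Taylor expansion of $e^{-|\xi|/2}$ about $\xi = 0$ --- carrying an extra power of $|\xi|$ --- governs the decay; $\al = 0$, the Poisson kernel itself, is such a case.

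The error is harmless for the lemma: $(1+|x|)^{-d-\al}$ lies in $L^r(\R^d)$ for every $1 \le r \le \infty$ as soon as $\al > 0$, and $\al = 0$ is handled by the explicit formula $K_1(x) = c_1(c_2+|x|^2)^{-(d+1)/2}$ that the paper records. I would also caution that subordination does not genuinely reduce matters to a ``routine Gaussian kernel calculation'': for fractional $\al$, determining the decay of $\mathcal{F}^{-1}(|\xi|^\al e^{-\ld|\xi|^2})$ is the same homogeneous-distribution problem you started with, so the step you flag as the real work is not simplified by the reduction. A shorter route to close the paper's gap for $\al > 0$ is to note that $(1-\Delta_\xi)^N\bigl(|\xi|^\al e^{-|\xi|/2}\bigr) \in L^1(\R^d)$ whenever $2N < d + \al$, giving $|K^\al_1(x)| \lesssim (1+|x|)^{-2N}$ and hence $K^\al_1 \in L^1 \cap L^\infty \subset L^r$ for every $r$.
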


\begin{proof}
Let $K_t(x)$ denote the kernel for $P(t)$, whose Fourier transform is given by 
$\ft K_t(\xi) = e^{-\frac{|\xi|}{2}t}$.
Then, we have 
\begin{align}
 K_t(x) = t^{-d} K_1(t^{-1}x), 
 \label{P2}
\end{align}

\noi
where $K_1(x)$ satisfies
\[ K_1(x) = \frac{c_1}{(c_2 + |x|^2)^{\frac{d + 1}{2}}}\]

\noi
for some $c_1, c_2 > 0$.
In particular, we have $K_1 \in L^r(\R^d)$ for any $1\leq r \leq \infty$.

We first consider the case $\al = 0$.
For $1 \leq r \leq \infty$ with $\frac{1}{r} = \frac{1}{q} - \frac{1}{p} + 1$, 
it follows from \eqref{P2} that 
\begin{align}
\| K_t\|_{L^r} =t^{-d (1 - \frac{1}{r})} 
\| K_1\|_{L^r}
= C_r t^{-d (\frac{1}{p} - \frac{1}{q})}.
\label{P3}
\end{align}

\noi
Then, \eqref{P1} follows from Young's inequality and \eqref{P3}.

Next, we consider the case $\al > 0$.
Noting that 
$D^\al P(t) f = (D^\al K_t) *f$, we need to study the scaling property of $D^\al K_t$.
On the Fourier side, we have
\[ \ft{D^\al K_t}(\xi) = |\xi|^{\al} e^{-\frac{|\xi|}{2}t}
= t^{-\al}  (t |\xi|)^{\al} e^{-\frac{|\xi|t}{2}}
= t^{-\al} \ft {D^\al K_1}(t \xi).
\]

\noi
Namely, we have 
\begin{align}
D^\al K_t(x)  = t^{-d-\al} (D^\al K_1)(t^{-1}x).
\label{P4}
\end{align}

\noi
Then, proceeding as before, 
the bound \eqref{P1} follows from Young's inequality and \eqref{P4}.
\end{proof}

\subsection{Probabilistic estimates}

In this subsection, we establish certain probabilistic Strichartz estimates.
See also Lemma 5.3 in \cite{KC}.

We first  recall the following probabilistic estimate. See \cite{BT1} for the proof.

\begin{lemma} \label{LEM:R1}
Given $j = 0, 1$,
let $\{g_{n, j}\}_{n\in \Z^d}$ be a sequence of mean-zero complex-valued,
 random variables, 
 satisfying  
 \eqref{B3}, as in Subsection \ref{SUBSEC:Wiener}.
   Then, there exists $C>0$ such that
\[ \bigg\| \sum_{n \in \Z^d} g_{n, j}(\omega) c_n\bigg\|_{L^p(\Omega)}
\leq C \sqrt{p} \| c_n\|_{\l^2_n(\Z^d)}\]

\noi
for any $j = 0, 1$, any finite  $p \geq 2$,  and any sequence $\{c_n\} \in \l^2(\Z^d)$.
\end{lemma}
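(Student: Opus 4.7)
The plan is to prove this by the classical Khintchine--Kahane route: establish a subgaussian tail bound for $S(\omega) := \sum_{n \in \Z^d} g_{n,j}(\omega) c_n$ via a Chernoff-type moment generating function argument, and then convert it into the $L^p$ estimate with the desired $\sqrt{p}$ growth. The hypothesis \eqref{B3} is precisely a statement that the $g_{n,j}$ are subgaussian with a uniform constant, so this approach is natural.

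First, I would use the symmetry condition \eqref{K1} to rewrite the sum in terms of an independent family. Splitting $\Z^d = \mathcal{I} \cup (-\mathcal{I}) \cup \{0\}$ gives
\[
S(\omega) = g_{0,j}(\omega) c_0 + \sum_{n \in \mathcal{I}} \bigl( g_{n,j}(\omega) c_n + \overline{g_{n,j}(\omega)}\, c_{-n} \bigr),
\]
so that, by the independence assumption, $\Re S$ and $\Im S$ become sums of independent real-valued random variables of the form $a_n \Re g_{n,j} + b_n \Im g_{n,j}$ (with $a_n, b_n$ depending linearly on $c_n, c_{-n}$), where $\sum_{n}(a_n^2 + b_n^2) \lesssim \|c\|_{\ell^2}^2$.

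Next, I would compute the moment generating function. For $\lambda \in \R$, using independence and the exponential moment bound \eqref{B3} applied with $\gamma = (\lambda a_n, \lambda b_n)$ for each $n \in \mathcal{I}$ (and $\gamma = \lambda a_0$ for $n = 0$), I get
\[
\E\bigl[e^{\lambda \Re S}\bigr] \leq \exp\Bigl( c\lambda^2 \sum_n (a_n^2 + b_n^2)\Bigr) \leq \exp\bigl(C \lambda^2 \|c\|_{\ell^2}^2\bigr),
\]
and analogously for $\E[e^{\lambda \Im S}]$. Optimizing Markov's inequality $P(\Re S > t) \leq e^{-\lambda t}\E[e^{\lambda \Re S}]$ in $\lambda$ yields the subgaussian tail
\[
P(|S(\omega)| > t) \leq 4 \exp\!\bigl(-c t^2 / \|c\|_{\ell^2}^2\bigr).
\]
Finally, the $L^p$ bound follows from the layer-cake representation
\[
\E\bigl[|S|^p\bigr] = p \int_0^\infty t^{p-1} P(|S| > t)\, dt,
\]
and the change of variables $t = \|c\|_{\ell^2}\, s$ together with the standard estimate $\int_0^\infty s^{p-1} e^{-c s^2}\, ds \lesssim (C p)^{p/2}$, giving $\|S\|_{L^p(\Omega)} \leq C\sqrt{p}\,\|c\|_{\ell^2}$.

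I expect no serious obstacle here: this is essentially Kahane's inequality, and the only mildly technical point is the bookkeeping for the complex/symmetric decomposition so that \eqref{B3} can be applied with the correct two-dimensional vector $\gamma$ for $n \neq 0$ and the one-dimensional scalar $\gamma$ for $n = 0$. Once the sum is written as a finite linear combination of the independent family $\{g_{0,j}, \Re g_{n,j}, \Im g_{n,j}\}_{n \in \mathcal{I}}$, the rest is routine Chernoff bounding, and the $\sqrt{p}$ factor emerges purely from the Gaussian-type integral at the end.
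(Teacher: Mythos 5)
Your proof is correct and follows essentially the same route as the reference the paper cites for this lemma (the paper itself provides no proof, deferring to Burq--Tzvetkov \cite{BT1}): decompose the complex sum via $\Z^d = \mathcal{I} \cup (-\mathcal{I}) \cup \{0\}$ and the symmetry $g_{-n,j} = \overline{g_{n,j}}$ into a linear combination of the independent real family, apply the exponential moment hypothesis \eqref{B3} (with two-dimensional $\gamma$ for $n \neq 0$ and scalar $\gamma$ for $n = 0$) to get a subgaussian tail via Chernoff, and convert to the $L^p$ bound with the $\sqrt{p}$ factor by the layer-cake formula and the Gamma-function estimate. No gaps; the bookkeeping of coefficients and the $\ell^2$ control you indicate are exactly the standard steps.
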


We now establish the first probabilistic Strichartz estimate.

\begin{proposition}\label{PROP:PS}

Given $(u_0, u_1) \in \H^0(\R^d)$, 
let $(u_0^\o, u_1^\o)$ be its Wiener randomization defined in \eqref{R1}, 
satisfying \eqref{B3}.
Then,
given any $2\leq q, r<\infty$ and $\al \geq  0$, 
satisfying $q\al < 1$, 
there exist $C, c>0$ such that
\begin{align}
P\Big(\|D^\al V(t) (u_0^\o,&  u_1^\o) \|_{L^q([0, T]; L^r_x)}> \ld\Big)
\leq C\exp\Bigg(-c\frac{\ld^2}{T^{\frac 2 q -2\al } \|   (u_0, u_1)  \|_{\H^0}^{2}}\Bigg)
\label{PS1}
\end{align}
	
\noi
for any $T > 0$ and $\ld > 0$.

\end{proposition}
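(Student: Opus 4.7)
The plan is to first establish a Gaussian-type moment bound
\begin{align*}
\Bigl\|\,\|D^\al V(t)(u_0^\omega,u_1^\omega)\|_{L^q([0,T];L^r_x)}\Bigr\|_{L^p(\Omega)} \lesssim \sqrt{p}\, T^{\frac{1}{q}-\al}\, \|(u_0,u_1)\|_{\H^0}
\end{align*}
for every $p\ge\max(q,r)$, and then extract the tail bound \eqref{PS1} by the standard subgaussian-tail argument: Chebyshev's inequality combined with the optimal choice $p\sim\lambda^2/(T^{\frac{2}{q}-2\al}\|(u_0,u_1)\|_{\H^0}^2)$, with the $\lambda$-small regime absorbed into the constant $C$.

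To prove the moment bound, I would first use Minkowski's inequality (valid for $p\ge\max(q,r)$) to bring $L^p(\Omega)$ inside $L^q_tL^r_x$, and then apply Lemma~\ref{LEM:R1} pointwise in $(x,t)$ to the random series \eqref{R1}, producing the square-function estimate
\begin{align*}
\|D^\al V(t)(u_0^\omega,u_1^\omega)\|_{L^p(\Omega)} \lesssim \sqrt p\, \Bigl(\sum_{n\in\Z^d}|D^\al V(t)(\psi(D-n)u_0,\psi(D-n)u_1)|^2\Bigr)^{1/2}.
\end{align*}
A second Minkowski (using $q,r\ge 2$) then moves the $\ell^2_n$ sum outside of $L^q_tL^r_x$, and the almost-orthogonality $\sum_n \|\psi(D-n)u\|_{L^2_x}^2 \lesssim \|u\|_{L^2_x}^2$ reduces matters to the deterministic unit-frequency bound
\begin{align*}
\|D^\al V(t)(\psi(D-n)u_0,\psi(D-n)u_1)\|_{L^q_tL^r_x} \lesssim T^{\frac{1}{q}-\al}\bigl(\|\psi(D-n)u_0\|_{L^2_x} + \jb{n}^{-1}\|\psi(D-n)u_1\|_{L^2_x}\bigr),
\end{align*}
uniform in $n\in\Z^d$.

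For this unit-frequency estimate, I would apply Bernstein's inequality \eqref{B2} to pass from $L^r_x$ to $L^2_x$ with a constant uniform in $n$ (legitimate since $V(t)\psi(D-n)$ preserves the unit-scale Fourier support around $n$), and then invoke Plancherel in $x$ and Minkowski in $(t,\xi)$ to reduce the estimate to the uniform-in-$\xi$ symbol bound $\bigl\| |\xi|^\al e^{-|\xi|t/2}\bigr\|_{L^q_t([0,T])} \lesssim T^{\frac{1}{q}-\al}$. This symbol bound is the main technical point and the only place where the hypothesis $q\al<1$ enters: a direct evaluation of the exponential integral gives $\lesssim |\xi|^{\al - 1/q}(1-e^{-cq|\xi|T})^{1/q}$, and a case split on whether $|\xi|T\lesssim 1$ (where one uses $|\xi|^\al\le T^{-\al}$ and $(1-e^{-x})^{1/q}\lesssim x^{1/q}$) or $|\xi|T\gtrsim 1$ (where $|\xi|^{\al - 1/q}\le T^{\frac{1}{q}-\al}$ precisely because $q\al<1$) produces the desired uniform bound; without $q\al<1$ the high-frequency tail of the symbol is non-integrable in $t$ and no orthogonality in $n$ can compensate. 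The $u_1$-contribution is handled in the same way, with the extra sinc factor $\sin(\tfrac{\sqrt 3}{2}|\xi|t)/(\tfrac{\sqrt 3}{2}|\xi|)$ in \eqref{A1} supplying the $\jb{n}^{-1}$ weight on each dyadic piece, so that summing in $n$ yields the $H^{-1}$ norm of $u_1$ inside $\|(u_0,u_1)\|_{\H^0}$.
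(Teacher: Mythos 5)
Your argument is correct and shares the paper's overall structure (an $L^p(\Omega)$ moment bound of the form $\sqrt{p}\,T^{1/q-\al}\|(u_0,u_1)\|_{\H^0}$ followed by Chebyshev, with the same probabilistic ingredients: Minkowski for $p\ge\max(q,r)$, Lemma~\ref{LEM:R1}, unit-scale Bernstein, and $\ell^2_n$ almost-orthogonality). The one place where you genuinely deviate is the treatment of the Poisson factor. The paper uses the factorization $V(t)=P(t)\circ U(t)$ from \eqref{A6} and applies the Schauder estimate (Lemma~\ref{LEM:Sch}) at $L^r_x\to L^r_x$ to peel off a pointwise-in-$t$ factor $t^{-\al}$ \emph{before} any of the probabilistic manipulations; after that, $U(t)$ is simply treated as a bounded map $\H^0\to L^2$, and the hypothesis $q\al<1$ enters only through the elementary scalar integral $\|t^{-\al}\|_{L^q_t([0,T])}\sim T^{1/q-\al}$ at the very last step. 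You instead keep the full multiplier, move $\ell^2_n$ outside both $L^q_t$ and $L^r_x$, and for each unit-frequency block reduce via Bernstein and Plancherel to the scalar bound $\sup_\xi\||\xi|^\al e^{-|\xi|t/2}\|_{L^q_t([0,T])}\lesssim T^{1/q-\al}$, which you verify directly by the case split on $|\xi|T$. The two routes are analytically equivalent --- your uniform symbol bound is precisely the Schauder estimate $\sup_\xi|\xi|^\al e^{-|\xi|t/2}\lesssim t^{-\al}$ fused with the $L^q_t$ integration in a single step, and both hinge on $q\al<1$ in exactly the same way --- but the paper's is arguably more modular, since Lemma~\ref{LEM:Sch} is reused elsewhere (e.g., in the gain-of-regularity argument of Subsection~\ref{SUBSEC:GWP1}).

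One small imprecision: since $g_{n,0}$ and $g_{n,1}$ are independent, the Khintchine-type bound from Lemma~\ref{LEM:R1} gives the square function $\bigl(\sum_n(|D^\al V_0(t)\psi(D-n)u_0|^2+|D^\al V_1(t)\psi(D-n)u_1|^2)\bigr)^{1/2}$, where $V_0,V_1$ are the two pieces of $V(t)$ in \eqref{A1}, rather than $\bigl(\sum_n|D^\al V(t)(\psi(D-n)u_0,\psi(D-n)u_1)|^2\bigr)^{1/2}$; your expression is a lower bound for the correct one and so cannot serve as the upper bound in that display. This is easily repaired by estimating the $u_0$- and $u_1$-contributions separately, as you in effect do in the last paragraph, and does not affect the conclusion.
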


\begin{remark}\label{REM:PS1}\rm
(i) From \eqref{PS1}, 
we conclude that 
\[ P\Big( \|D^\al V(t) (u_0^\o,  u_1^\o)\| \le  \lambda \Big) \too 1, \]

\noi
 as $\lambda \to \infty$ for fixed $T > 0$, or as $T \searrow 0$ for fixed $\lambda > 0$. 

\smallskip

\noi
(ii) Let $\al_0 \ge 0$ and $q\al_0 < 1$. Then,  by applying Proposition \ref{PROP:PS}
with $\al = 0$ and $\al = \al_0$,  we have 
\begin{align}
P\Big(\|\jb{\nb}^{\al_0} V(t) (u_0^\o,&  u_1^\o) \|_{L^q([0, T]; L^r_x)}> \ld\Big)
\leq C\exp\Bigg(-c\frac{\ld^2}{T^{\frac 2 q -2\al_0 } \|   (u_0, u_1)  \|_{\H^0}^{2}}\Bigg)
\label{PS1a}
\end{align}
	
\noi
for any $0 < T \le 1$ and $\ld > 0$, 
where $\jb{\nb} = \sqrt {1-\Dl}$ is as in \eqref{nb}.
We also have 
\begin{align}
P\Big(\|\jb{\nb}^{\al_0} V(t) (u_0^\o,&  u_1^\o) \|_{L^q([0, T]; L^r_x)}> \ld\Big)
\leq C\exp\Bigg(-c\frac{\ld^2}{T^{\frac 2 q  } \|   (u_0, u_1)  \|_{\H^0}^{2}}\Bigg)
\label{PS1b}
\end{align}

\noi
for any $ T \ge 1$ and $\ld > 0$.

\end{remark}

See also Lemma 5.3 in \cite{KC}, 
where the case  $q = r = 6$ was treated.
The proof of Proposition \ref{PROP:PS} follows the usual proofs of the probabilistic Strichartz estimates via Minkowski's integral inequality \cite{BT1, CO, BOP1}
but also utilizes 
 the Schauder estimate (Lemma \ref{LEM:Sch}).

\begin{proof}

From \eqref{A6} and 
Lemma \ref{LEM:Sch}
followed by Minkowski's integral inequality, we have 
\begin{align}
\begin{split}
\Big\| \|D^\al V(t) (u_0^\o,&  u_1^\o) \|_{L^q_t([0, T]; L^r_x)} \Big\|_{L^p(\O)}
 \les 
\Big\| \big\| t^{-\al} \|U (t) (u_0^\o, u_1^\o) \|_{L^r_x} \big\|_{L^q_t([0, T])}\Big\|_{L^p(\O)}\\
%& \leq T^{\frac {1}{q} - \al } 
& \leq 
\Big \| \big\| t^{-\al} \|U (t) (u_0^\o, u_1^\o) \|_{L^p(\O)} \big\|_{L^r_x} \Big\|_{L^q_t([0, T])}
\end{split}
\label{PS2}
\end{align}

\noi
for any finite $p \geq \max (q, r)$.
By 
Lemma \ref{LEM:R1}, 
Minkowski's integral inequality, 
Bernstein's unit-scale inequality \eqref{B2}, 
 and 
the boundedness of $U(t)$ from $\H^0(\R^d)$ into $L^2(\R^d)$,
we obtain
\begin{align}
\begin{split}
\eqref{PS2} & \les \sqrt{p}\, 
\Big \| t^{-\al} \big\|  \|\psi(D-n)  U(t) (u_0, u_1)  \|_{\l^2_n} \big\|_{L^r_x} \Big\|_{L^q_t([0, T])}\\
& \leq \sqrt{p}\, 
\Big \| t^{-\al} \big\|  \|\psi(D-n)  U(t) (u_0, u_1)  \|_{L^r_x} \big\|_{\l^2_n} \Big\|_{L^q_t([0, T])}\\
& \les \sqrt{p}\, 
\Big \| t^{-\al} \|    U(t) (u_0, u_1)  \|_{L^2_x} \Big\|_{L^q_t([0, T])}\\
& \les \sqrt{p} \, T^{\frac 1q - \al} 
 \|   (u_0, u_1)  \|_{\H^0}, 
\end{split}
\label{PS3}
\end{align}

\noi
where we used $q\al < 1$ in the last step.
Then, the tail estimate \eqref{PS1} follows
from \eqref{PS3} and Chebyshev's inequality.
See the proof of Lemma 3 in \cite{BOP1}.\footnote{Lemma 2.2 in the arXiv version.}
\end{proof}

In establishing almost sure global well-posedness, 
we need to introduce several additional linear operators.
  Define  $\wt V(t)  $  by 
\begin{align}
\begin{split}
\wt V(t) (u_0, u_1)
& = \jb{\nb}^{-1} \dt V(t) \\
& =  - \frac{2\sqrt 3}{3}\frac{D}{\jb{\nb}} e^{- \frac{D}{2}t} 
\sin \big(\tfrac{\sqrt{3}}{2} Dt\big) u_0\\
& \quad 
+ 
e^{- \frac{D}{2}t}
\bigg(-\frac 12 \frac{D}{\jb{\nb}}\frac{\sin \big(\tfrac{\sqrt{3}}{2} Dt\big)}{\tfrac{\sqrt{3}}{2} D}
+ \frac{\cos \big(\tfrac{\sqrt{3}}{2} Dt\big)}{\jb{\nb}}
\bigg)
 u_1.
 \end{split}
\label{z3}
\end{align}

\noi
Then, 
\noi
defining $\wt U(t)$ by 
\begin{align*}
\begin{split}
\wt U (t) (u_0, u_1)
& =  - \frac{2\sqrt 3}{3}\frac{D}{\jb{\nb}} 
\sin \big(\tfrac{\sqrt{3}}{2} Dt\big) u_0\\
& \quad 
+ 
\bigg(-\frac 12 \frac{D}{\jb{\nb}}\frac{\sin \big(\tfrac{\sqrt{3}}{2} Dt\big)}{\tfrac{\sqrt{3}}{2} D}
+ \frac{\cos \big(\tfrac{\sqrt{3}}{2} Dt\big)}{\jb{\nb}}
\bigg)
 u_1, 
 \end{split}
%\label{z4}
\end{align*}

\noi
we have 
\begin{align}
\wt V(t) = P(t) \circ \wt U(t) .
\label{z5}
\end{align}

Next, we state 
 a probabilistic estimate involving the $L^\infty_t$-norm, 
 which plays an important role in establishing an energy bound
 for almost sure global well-posedness.
The proof is based on an adaptation
of the proof of Proposition 3.3 in \cite{OP} combined with the Schauder estimate (Lemma \ref{LEM:Sch}).

\begin{proposition}
\label{PROP:PS2}

Given a pair   $(u_0, u_1)$ of real-valued functions defined on $\R^2$,
let $(u_0^{\omega}, u_1^\omega)$ be its Wiener randomization defined in \eqref{R1}, 
satisfying \eqref{B3}.
Fix $T \gg 1 \geq  T_0 > 0$
and let
$V^*(t) = V(t)$ or $\wt V(t)$ defined in \eqref{A1} and \eqref{z3}, 
respectively.
Then,
given any $2\leq  r\le \infty$, $\al \geq  0$, and $\eps_0 > 0$, 
there exist $C, c>0$ such that
\begin{align*}
P\Big(\|D^\al V^*(t) (u_0^\o,&  u_1^\o) \|_{L^\infty([T_0, T]; L^r_x)}> \ld\Big)
\leq CT \exp\Bigg(-c\frac{\ld^2}{T^2 T_0^{-2\al} \|   (u_0, u_1)  \|_{\H^{\eps_0}}^{2}}\Bigg)
%\label{PS4}
\end{align*}
	
\noi
for any  $\ld > 0$.

\end{proposition}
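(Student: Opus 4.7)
The plan is to adapt the unit-interval Sobolev embedding trick from the proof of Proposition~3.3 in \cite{OP}, while using the Schauder estimate (Lemma~\ref{LEM:Sch}) to handle the $D^\al$ weight via the dissipative smoothing of the Poisson kernel $P(t)$. First I factor $V^*(t) = P(t)\circ U^*(t)$ from \eqref{A6} and \eqref{z5}, so that Lemma~\ref{LEM:Sch} with $p=q=r$ yields
\[
\|D^\al V^*(t) f\|_{L^r_x} \les t^{-\al}\|U^*(t) f\|_{L^r_x} \le T_0^{-\al}\|U^*(t) f\|_{L^r_x}
\]
for every $t \in [T_0,T]$. This localizes the $T_0^{-\al}$ loss appearing in the tail bound and reduces matters to a probabilistic $L^\infty_t$ estimate on the smoother, non-dissipative piece $U^*(t)(u_0^\o, u_1^\o)$, together with a similar analysis for its time derivative.

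To handle the $L^\infty_t$-norm, I would partition $[T_0, T]$ into $N \les T$ unit intervals $I_k$ and apply a union bound, so it suffices to control the $L^\infty_t(I_k; L^r_x)$-norm with Gaussian concentration and then multiply by $N$; this produces the $CT$ prefactor. On each $I_k$ I apply the standard unit-interval Sobolev embedding
\[
\|F\|_{L^\infty_t(I_k; L^r_x)} \les \|F\|_{L^q_t(I_k; L^r_x)} + \|\dt F\|_{L^q_t(I_k; L^r_x)}
\]
for some large but finite $q$, and then run the probabilistic Strichartz argument in the proof of Proposition~\ref{PROP:PS} (Minkowski's integral inequality, Lemma~\ref{LEM:R1}, Bernstein's unit-scale inequality~\eqref{B2}, and the boundedness of $U^*(t)$ from $\H^0$ into $L^2$) on each term. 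The non-differentiated piece is essentially Proposition~\ref{PROP:PS} restricted to $I_k$, giving an $L^p(\O)$ moment bound of order $\sqrt{p}\,T_0^{-\al}\|(u_0,u_1)\|_{\H^0}$.

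For the derivative piece I use the identity $\dt V(t) = \jb{\nb}\wt V(t)$ from \eqref{z3} (and its analogue for $V^* = \wt V$), which rewrites $\dt F$ in terms of $\wt V = P\circ\wt U$ at the cost of one extra derivative. Since $\jb{\nb}\les 1 + D$, I would split
\[
D^\al\jb{\nb}\wt V(t) f = P(t)\bigl(D^\al + D^{\al+1-\eps_0}\cdot D^{\eps_0}\bigr)\wt U(t) f,
\]
absorb all but $\eps_0$ of the extra derivative into the Schauder smoothing of $P(t)$, and place $\jb{\nb}^{\eps_0}$ on the randomized data. This is precisely where the $\H^{\eps_0}$-regularity of $(u_0,u_1)$ enters: it keeps the randomization moment bounded via Bernstein~\eqref{B2} and Lemma~\ref{LEM:R1} while keeping the $T_0$-loss at $T_0^{-\al}$ modulo subpolynomial corrections that are dominated by the $T^2$ factor ($T\gg 1$) inside the exponential. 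Chebyshev's inequality applied to the resulting moment bound (as in the proof of Lemma~3 in \cite{BOP1}) then produces the Gaussian tail, and summing over the $N \les T$ intervals yields the stated estimate.

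The main obstacle is this derivative step: one must split the $\jb{\nb}$ factor so that (i) just enough Schauder smoothing of $P(t)$ is used to keep the power $T_0^{-\al}$ and not $T_0^{-\al-1}$, yet (ii) only $\eps_0$ derivatives of regularity are consumed from the initial data, so that the Wiener randomization is controlled by $\|(u_0,u_1)\|_{\H^{\eps_0}}$. The fact that $\wt U(t)$ already contains $\jb{\nb}^{-1}$ factors (see \eqref{z3}) is exactly what makes this balancing possible; all remaining steps are straightforward adaptations of the arguments proving Proposition~\ref{PROP:PS} and Proposition~3.3 in \cite{OP}.
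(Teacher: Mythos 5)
Your high-level plan (Schauder estimate to extract $T_0^{-\al}$, then a unit-interval decomposition with union bound to produce the $CT$ prefactor, then Minkowski/Lemma~\ref{LEM:R1}/Bernstein for the Gaussian moment, then Chebyshev) matches the paper's, and the first step --- factoring $V^* = P\circ U^*$ and applying Lemma~\ref{LEM:Sch} with $p=q=r$ to reduce to $T_0^{-\al}\|U^*(t)(u_0^\o,u_1^\o)\|_{L^\infty_t L^r_x}$ --- is exactly right. The gap is in how you handle the time derivative in the $L^\infty_t$ embedding. After the first reduction, the quantity to control is $\|U^*(t)(u_0^\o,u_1^\o)\|_{L^\infty_t(I_k;L^r_x)}$, and you should apply the Sobolev embedding in time to \emph{this} object, not to $D^\al V^*(t)(u_0^\o,u_1^\o)$; your proposal goes back to $\dt V$ and reintroduces a Poisson kernel, which is precisely what causes the trouble.

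Concretely, your proposed split absorbs $D^{\al+1-\eps_0}$ into the Poisson kernel, and Lemma~\ref{LEM:Sch} then yields a factor $t^{-(\al+1-\eps_0)} \le T_0^{-\al-1+\eps_0}$, not $T_0^{-\al}$. Since $T_0 \le 1$, the extra $T_0^{-1+\eps_0}$ is a genuine worsening of the constant, and it is a function of $T_0$, not $T$; it cannot be "dominated by the $T^2$ factor" --- $T$ and $T_0$ are independent parameters. Even if you use the Gagliardo--Nirenberg form of the embedding so the derivative term carries only an exponent $1/q$, the constant degrades to $T_0^{-\al-(1-\eps_0)/q}$ for any finite $q$, which still does not match the statement. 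Moreover, an integer Sobolev embedding $\|F\|_{L^\infty_t(I_k)} \les \|F\|_{L^q_t(I_k)}+\|\dt F\|_{L^q_t(I_k)}$ applied to $U^*(t)$ alone (without Schauder to help) would cost a \emph{full} spatial derivative on the data, since $\dt U^*(t)$ contains $D\sin(cDt)$ etc., and $\H^{\eps_0}$ does not cover that.

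The correct resolution, and what the paper's citation of \cite[Lemma 3.4]{OP} (and the remark invoking the Garsia--Rodemich--Rumsey inequality) actually encodes, is to use a \emph{fractional} Sobolev embedding in time on each unit interval: $W^{\sigma,q}_t(I_k)\hookrightarrow L^\infty_t(I_k)$ for $\sigma q > 1$ with $\sigma$ small. Since $U^*(t)$ is (up to bounded multipliers) a superposition of $e^{\pm i c D t}$, a fractional time derivative of order $\sigma$ becomes a spatial derivative of order $\sigma$ on the data, costing only $\|\jb{\nb}^{\sigma}(u_0^\o,u_1^\o)\|$ with $\sigma = \eps_0$. No Poisson kernel and no further Schauder estimate appear at this stage, so the only $T_0$-dependence is the clean $T_0^{-\al}$ extracted in the first step. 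Your write-up acknowledges the tension in your step (i) versus (ii) but resolves it incorrectly; the fractional-in-time embedding on $U^*$ is the missing ingredient.
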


\begin{proof}
Let $U^*(t) = P(-t) \circ V^*(t)$.
Then, from 
Lemma \ref{LEM:Sch}
with \eqref{A6} or \eqref{z5}, we have 
\begin{align*}
\begin{split}
 \|D^\al V^*(t) (u_0^\o,&  u_1^\o) \|_{L^\infty_t([T_0, T]; L^r_x)} 
 \les 
 \big\| t^{-\al} \|U^* (t) (u_0^\o, u_1^\o) \|_{L^r_x} \big\|_{L^\infty_t([T_0, T])}\\
%& \leq T^{\frac {1}{q} - \al } 
& \leq 
T_0^{-\al}
 \|U^*(t) (u_0^\o,  u_1^\o) \|_{L^\infty_t([T_0, T]; L^r_x)} 
\end{split}
%\label{PS5}
\end{align*}

\noi
As in the proof of Proposition 3.3 in \cite{OP}, 
the rest follows from Lemma 3.4 in \cite{OP}, 
which established similar $L^\infty_t$-bounds
for the half-wave operators $e^{\pm i tD}$.
\end{proof}

\begin{remark}\rm
It is also possible to prove Proposition \ref{PROP:PS2}, using 
the Garsia-Rodemich-Rumsey inequality (\cite[Theorem A.1]{FV}).
See, for example, Lemma 2.3 in \cite{GKOT} in the context of
the stochastic nonlinear wave equation.
\end{remark}

\section{Local well-posedness}\label{SEC:LWP}

In this section, we present the proof of Theorem \ref{THM:LWP1}.
Instead of \eqref{NLW5} with the zero initial data, we study \eqref{NLW5} with general (deterministic) initial data
$ (v_0, v_1)$:
%
%
%In this section, we prove almost sure local well-posedness of \eqref{NLW5}
%for $s > -\frac 15$.  
%This improves the result in \cite{KC}.
%In the following, we in fact consider the following Cauchy problem:
\begin{align}
\begin{cases}
\dt^2 v  -  \Dl  v  + D \dt v  + (v + z)^5 =0\\
(v, \dt v) |_{t = 0} = (v_0, v_1).
\end{cases}
\label{NLW6}
\end{align}
We recall from \eqref{z1} and \eqref{A1} that $z = V(t) (u_0^\o, u_1^\o)$ is the random linear solution
with the randomized initial data 
 $(u_{0}^{\omega}, u_{1}^{\omega})$ 
 which is the result of 
 the Wiener randomization  \eqref{R1} performed on the given deterministic initial data $(u_{0}, u_{1}) \in \mathcal{H}^{s}(\mathbb{R}^{2})$.

\begin{theorem}\label{THM:LWP2}
Let $s > -\frac 15$.
Fix $(v_0, v_1) \in \H^{s_0}(\R^2)$
for some $s_0 = s_0(s) > \frac 35$ sufficiently close to~$\frac 35$.
Then, 
there exist $ C, c, \g>0$ and  $0 < T_0 \ll1 $ such that
for each $0< T \le T_0$,
there exists a set $\O_T \subset \O$ with the following properties:

\smallskip
\begin{itemize}
\item[\textup{(i)}] The following probability bound holds:
\begin{equation}\label{probbound1}
\displaystyle P(\O_T^c) < C \exp\bigg(-\frac{c}{T^\g \|(u_0, u_1)\|_{\H^s}^2 }\bigg).
\end{equation}
%\rule[-7mm]{0mm}{0mm}

\medskip

\item[\textup{(ii)}]
For each $\o \in \O_T$, there exists a \textup{(}unique\textup{)} 
solution $(v, \dt v)$ 
to \eqref{NLW6}
with $(v, \dt v) |_{t = 0} = (v_0, v_1)$
in the class 
\begin{align}
 (v, \dt v) \in C([0,  T]; \H^{s_0} (\R^2))
\qquad \text{and}\qquad 
v\in  L^{5+\dl}([0, T];  L^{10}(\R^2))
\label{class1}
\end{align}

\noi
for small $\dl> 0$ such that $s_0 \geq  1 - \frac 1{5+\dl} - \frac 2{10}$.

\end{itemize}

\end{theorem}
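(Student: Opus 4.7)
The plan is to solve the Duhamel equation
\[
v(t) = V(t)(v_0, v_1) - \int_0^t W(t-t')\bigl(v(t')+z(t')\bigr)^5\,dt' =: \Phi(v)(t)
\]
by Banach's contraction mapping principle in a closed ball in
\[
X_T \deff \bigl\{v : \|(v, \dt v)\|_{L^\infty_T\H^{s_0}} + \|v\|_{L^{5+\dl}_T L^{10}_x} \leq R\bigr\},
\]
where $z = V(t)(u_0^\o, u_1^\o)$ is the random linear evolution of the Wiener-randomized data. For the homogeneous contribution, note that $(q,r) = (5+\dl, 10)$ is $\s$-admissible with $\s = 2$ in dimension $d = 2$ and satisfies the scaling identity \eqref{admis2} precisely at the regularity level $s_0 = 1 - \tfrac{1}{5+\dl} - \tfrac{1}{5}$, which approaches $\tfrac 35$ from above as $\dl \to 0^+$. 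Thus Lemma \ref{LEM:Str} yields $\|V(\cdot)(v_0, v_1)\|_{X_T} \les \|(v_0, v_1)\|_{\H^{s_0}}$.

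For the nonlinear contribution, I would expand $(v+z)^5 = \sum_{j=0}^5 \binom{5}{j} v^j z^{5-j}$ and estimate each term after integrating against $W(t-t') = P(t-t') \circ S(t-t')$. The strategy is to combine the Schauder smoothing of the Poisson kernel (Lemma \ref{LEM:Sch}) with the $L^2$-based action of the wave part $S(t-t')$ to derive an inhomogeneous bound of the schematic form
\[
\Big\|\int_0^t W(t-t') F(t')\,dt'\Big\|_{X_T} \les T^\theta \|F\|_{L^a_T L^b_x}
\]
for an appropriate dual pair $(a,b)$ and some $\theta > 0$, providing the smallness in $T$. Each piece $v^j z^{5-j}$ is then handled by H\"older's inequality, with the $v$-factors controlled by the $X_T$-norm and the $z$-factors controlled on a good event $\O_T$. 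This good event is constructed by intersecting finitely many Strichartz-type events of the form
\[
\Bigl\{\|D^{\al} V(t)(u_0^\o, u_1^\o)\|_{L^{q}_T L^{r}_x} \leq K\Bigr\},
\]
for carefully chosen triples $(\al, q, r)$ with $q\al < 1$, interpolated against the $\H^s$-regularity of the data. The tail bounds of Proposition \ref{PROP:PS}, combined with a union bound, then produce the probability estimate \eqref{probbound1}, with $\g > 0$ reflecting the power of $T$ appearing in the Gaussian denominator.

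Closing the argument amounts to verifying, on $\O_T$, the self-mapping estimate
\[
\|\Phi(v)\|_{X_T} \leq C\|(v_0,v_1)\|_{\H^{s_0}} + C T^\theta (R + K)^5
\]
together with the analogous Lipschitz bound $\|\Phi(v) - \Phi(\wt v)\|_{X_T} \leq C T^\theta (R+K)^4 \|v - \wt v\|_{X_T}$, so that choosing $R \sim \|(v_0,v_1)\|_{\H^{s_0}} + 1$ and $T_0$ sufficiently small produces a contraction whose unique fixed point is the desired solution in the class \eqref{class1}. \textbf{The main obstacle} is the purely stochastic term $z^5$ in the expansion: its H\"older treatment requires pushing Proposition \ref{PROP:PS} toward the constraint $q\al < 1$ with $\al$ as large as possible to absorb the negative regularity $s < 0$ of the data, while simultaneously keeping the pair $(q,r)$ compatible with the deterministic Strichartz pair $(5+\dl, 10)$ and with the scaling identity that fixes $s_0 > \tfrac 35$. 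Arranging a single consistent choice of exponents that makes all five factors of $z$ affordable, and that pins the sharp threshold at $s = -\tfrac 15$, is the most delicate balancing act in the proof.
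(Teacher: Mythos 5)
Your overall architecture — Duhamel contraction in $C_T\H^{s_0}\cap L^{5+\dl}_TL^{10}_x$, homogeneous Strichartz (Lemma~\ref{LEM:Str}) for the linear part, $T^\theta$-smallness for the nonlinearity, and Proposition~\ref{PROP:PS} with $q\al<1$ pinning $s>-\tfrac{1}{5+\dl}$ — is exactly the paper's route. However, two places in your plan are more complicated than they need to be, and one place hides a genuine technical issue.

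First, the paper does \emph{not} expand $(v+z)^5$ into $\sum_j \binom 5j v^jz^{5-j}$, nor does it intersect multiple Strichartz events $\{\|D^\al V(t)(u_0^\o,u_1^\o)\|_{L^q_TL^r_x}\le K\}$ for a family of triples $(\al,q,r)$. The crucial observation is that $v$ and $z$ are measured in the \emph{same} norm $L^{5+\dl}_TL^{10}_x$, so the nonlinearity is handled uniformly via $\|(v+z)^5\|_{L^1_TL^2_x}\lesssim T^\theta\|v+z\|_{L^{5+\dl}_TL^{10}_x}^5$, and the good event $\O_T$ is the \emph{single} event $\{\|z\|_{L^{5+\dl}_TL^{10}_x}\le C_0\}$. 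This is why the non-diagonal pair $(5+\dl,10)$ is chosen: it simultaneously brings $s_0$ down to just above $\tfrac35$ (needed later for the global theory) and puts $z$ and $v$ in the same space so that one event suffices. Your multi-event union bound would still close, but it obscures this point and is not needed to reach the $s>-\tfrac15$ threshold.

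Second, and more substantively: the inhomogeneous estimate
\[
\Big\|\int_0^t W(t-t')F(t')\,dt'\Big\|_{L^{5+\dl}_TL^{10}_x}\lesssim\|F\|_{L^1_TL^2_x}
\]
(the paper's Lemma~\ref{LEM:lin1}) is left at the level of ``combine Schauder with the $L^2$-boundedness of $S(t)$,'' but this does not work directly: $W(t)=P(t)\circ S(t)$ carries the factor $D^{-1}$ inside $S(t)$, which is singular at low spatial frequency and cannot be absorbed by the Poisson kernel smoothing (which only \emph{loses} negative powers of $t$, it does not gain at $\xi\to0$). The paper resolves this by a low/high frequency split: for $\P_{\le 1}F$ one uses $|\widehat{S(t)f}(\xi)|\le t|\hat f(\xi)|$ (via $\sin x\le x$) and Bernstein, while for $\P_{\gg1}F$ one applies the homogeneous Strichartz estimate term by term under the $t'$-integral. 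Without this split your schematic inhomogeneous bound does not follow, so this step of your plan, as written, has a gap.
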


In Subsection \ref{SUBSEC:3.1}, 
we first state several linear estimates.
We then present the proof of Theorem \ref{THM:LWP2}
in Subsection \ref{SUBSEC:3.2}.

\subsection{Linear estimates}
\label{SUBSEC:3.1}

In this subsection,  we establish several nonhomogeneous
linear estimates, which are slightly different from 
those in Theorem 3.3 in \cite{KC}.

\begin{lemma}\label{LEM:lin1}
Let $W(t)$ be as in \eqref{A2}.
Then, given sufficiently small $\dl > 0$, we have 
\begin{align}
\bigg\|\int_0^t W(t - t')F(t') dt'\bigg\|_{L^{5+\dl }_t([0, T];  L^{10}_x(\R^2))}
\les \|F\|_{L^1([0, T]; L^2_x(\R^2))}
\label{lin1}
\end{align}

\noi
for any $0 < T \leq 1$.
%Here, $5+$ means $5 + \eps $ for some small $\eps > 0$.

\end{lemma}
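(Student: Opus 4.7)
The plan is to decompose the propagator $W(t) = P(t)\circ S(t)$ as in the text, trade the integrability gain from the Poisson kernel (via Lemma~\ref{LEM:Sch}) against the mild $\tau$-growth from the sine factor, and close the argument by a one-dimensional Young-type estimate in time.

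First, I would pull the $L^{10}_x$ norm inside the Duhamel integral by Minkowski's integral inequality:
\begin{align*}
\bigg\|\int_0^t W(t-t')F(t')\,dt'\bigg\|_{L^{10}_x}
\le \int_0^t \|W(t-t')F(t')\|_{L^{10}_x}\,dt'.
\end{align*}
For fixed $0< t' < t$, factoring $W(t-t') = P(t-t')\circ S(t-t')$ and applying Lemma~\ref{LEM:Sch} with $\al = 0$, $p = 2$, $q = 10$, $d = 2$ (so the weight is $(t-t')^{-d(\frac12 - \frac1{10})} = (t-t')^{-4/5}$) gives
\begin{align*}
\|W(t-t')F(t')\|_{L^{10}_x}
\les (t-t')^{-\frac45} \|S(t-t')F(t')\|_{L^2_x}.
\end{align*}
By Plancherel and the trivial pointwise bound $\big|\frac{\sin(\frac{\sqrt3}{2}|\xi|\tau)}{\frac{\sqrt3}{2}|\xi|}\big|\le \tau$ on the Fourier symbol of $S(\tau)$, we have $\|S(\tau)g\|_{L^2_x}\le \tau\|g\|_{L^2_x}$. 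Combining these yields the pointwise-in-time bound
\begin{align*}
\|W(t-t')F(t')\|_{L^{10}_x}
\les (t-t')^{\frac15}\|F(t')\|_{L^2_x}.
\end{align*}

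Plugging this back, the Duhamel term is pointwise in $t\in[0,T]$ bounded by $\int_0^t (t-t')^{1/5}\|F(t')\|_{L^2_x}\,dt'$, and since $(t-t')^{1/5}\le T^{1/5}$ on $[0,T]$, Young's inequality (or a direct estimate) gives
\begin{align*}
\bigg\|\int_0^t W(t-t')F(t')\,dt'\bigg\|_{L^\infty_t([0,T];L^{10}_x)}
\les T^{\frac15}\|F\|_{L^1([0,T];L^2_x)}.
\end{align*}
Interpolating with the trivial $L^{5+\dl}_t([0,T])\embeds T^{1/(5+\dl)} L^\infty_t([0,T])$ embedding and using $T\le 1$ (so $T^{1/5+1/(5+\dl)}\le 1$) then closes the estimate \eqref{lin1} for any $\dl \ge 0$.

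I do not expect any serious obstacle: the scheme is essentially forced on us once we recognize that the only smoothing available is the Poisson kernel, and that the sine factor is $L^2$-bounded with at worst linear growth in $\tau$. The only thing to double-check is the bookkeeping of the exponents, namely that the Schauder loss $\tau^{-4/5}$ is more than compensated by the gain $\tau^{+1}$ from $S(\tau)$, leaving the integrable (in fact bounded) weight $\tau^{1/5}$ on $[0,T]\subset[0,1]$.
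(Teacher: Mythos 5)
Your proof is correct, and it takes a genuinely different (and in fact more elementary) route than the paper's. The paper splits the data into low and high frequencies: on the low frequencies it uses Minkowski, Bernstein, and $|\sin x|\le |x|$; on the high frequencies it invokes the homogeneous Strichartz estimate (Lemma \ref{LEM:Str}) together with a negative-regularity estimate for $\P_{\gg1}F$. You avoid the frequency decomposition and the Strichartz machinery altogether: you factor $W=P\circ S$, pay $\tau^{-4/5}$ from the Schauder estimate (Lemma \ref{LEM:Sch} with $\alpha=0$, $p=2$, $q=10$, $d=2$), recoup a full power $\tau$ from the crude Plancherel bound $\|S(\tau)g\|_{L^2}\le\tau\|g\|_{L^2}$, and are left with the bounded weight $(t-t')^{1/5}$. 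This yields the slightly stronger $L^\infty_t([0,T];L^{10}_x)$ estimate, from which the $L^{5+\dl}_t$ bound follows by H\"older in time, and in fact works for any $\dl\ge0$ rather than just small $\dl>0$. The trade-off is that your argument is tailored to the parabolic smoothing of $P(t)$, whereas the paper's Strichartz route is the one that generalizes naturally when the dissipation is weakened or absent; but for this particular lemma your approach is cleaner.

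Two small points of bookkeeping you should tidy when writing this up: (1) the embedding direction in the last step is $L^\infty_t([0,T])\hookrightarrow L^{5+\dl}_t([0,T])$ with norm $T^{1/(5+\dl)}$, not the other way around as your notation suggests; (2) strictly speaking the singular weight $(t-t')^{-4/5}$ from Lemma \ref{LEM:Sch} is only valid for $t'<t$, but since it is immediately multiplied by $(t-t')$ and the integrand vanishes at $t'=t$ (as $S(0)=0$), the combined estimate $\|W(\tau)g\|_{L^{10}}\les\tau^{1/5}\|g\|_{L^2}$ extends continuously to $\tau=0$, so no further care is needed.
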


\begin{proof}
Let $\P_{\les 1}$ be a smooth\footnote{Namely, given by 
a smooth Fourier multiplier.} projection onto 
spatial frequencies $\{|\xi|\leq 1\}$
and set $\P_{\gg 1} = \Id - \P_{\les 1}$.
In the following, we separately estimate the contributions
from $\P_{\les 1} F$ and $\P_{\gg 1}F$.

Let us first estimate the low frequency contribution.
By Minkowski's integral inequality and Bernstein's unit-scale inequality \eqref{B2}
with 
$\sin x \leq x$ for $x \geq 0$, we have
\begin{align}
\begin{split}
\bigg\|\int_0^t & W(t - t')   \P_{\les 1}F(t') dt'\bigg\|_{L^{5+\dl}_t([0, T];  L^{10}_x)}\\
&  \leq 
\bigg\|\int_0^t \|\ind_{[0, t]}(t') W(t - t')\P_{\les 1}F(t')\|_{L^{10}_x} 
dt'\bigg\|_{L^{5+\dl}_t([0, T])}\\
& \leq 
\bigg\|\int_0^t (t-t') \|\ind_{[0, t]}(t')\P_{\les 1}F(t')\|_{L^{2}_x} 
dt'\bigg\|_{L^{5+\dl}_t([0, T])}\\
& \les T^\ta \|F\|_{L^1([0, T]; L^2_x)}
\end{split}
\label{lin2}
\end{align}

\noi
for some $\ta > 0$.

Next, we estimate the high frequency contribution. 
Note that the pair $(5+\dl, 10)$ is $\s$-admissible for $\s \geq \frac 12$
in the sense of \eqref{admis1}. %Definition 3.1 in \cite{KC}.
Let 
\begin{align}
s_0 = 1 - \frac{1}{5+\dl} - \frac{2}{10}
=   \frac{3}{5} +  \dl_0
\label{lin2a}
\end{align}

\noi
for some small $ \dl_0 = \dl_0(\dl)> 0$.
Then, 
by Minkowski's integral inequality
and the homogeneous Strichartz estimate (Lemma \ref{LEM:Str}), we have 
\begin{align}
\begin{split}
\bigg\|\int_0^t&  W(t - t')\P_{\gg1} F(t') dt'\bigg\|_{L^{5+\dl}_t([0, T];  L^{10}_x)}\\
&  \leq 
\int_0^T \|\ind_{[0, t]}(t') W(t - t')\P_{\gg 1}F(t')
\|_{L^{5+\dl}_t([0, T];  L^{10}_x)}
dt'\\
&  \les
\int_0^T \|\P_{\gg1} F(t')
\|_{H^{s_0-1}_x}
dt'\\
%&  \les
%\int_0^T \|\P_{\gg1} F(t')
%\|_{L^2_x}
%dt'\\
& 
\les \|F\|_{L^1([0, T]; L^2_x)}.
\end{split}
\label{lin3}
\end{align}

The desired bound \eqref{lin1} then follows from \eqref{lin2}
and \eqref{lin3}.
\end{proof}
	
\begin{lemma}\label{LEM:lin2}
Let $W(t)$ be as in \eqref{A2}.
Then, given $0 \leq  s \leq 1$, we  have 
\begin{align}
\bigg\|\int_0^t W(t - t')F(t') dt'\bigg\|_{C([0, T];   H^s_x(\R^2))}
& \les \|F\|_{L^1([0, T]; L^2_x(\R^2))}, 
\label{lin4}\\
\bigg\|\dt \int_0^t W(t - t')F(t') dt'\bigg\|_{C([0, T];   H^{s-1}_x(\R^2))}
& \les \|F\|_{L^1([0, T]; L^2_x(\R^2))},
\label{lin5}
\end{align}

\noi
for any $0 < T \leq 1$.

\end{lemma}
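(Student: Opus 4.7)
The plan is to prove both estimates by reducing them to uniform operator bounds for $W(t-t')$ and $\partial_t W(t-t')$ in the appropriate Sobolev spaces, and then invoking Minkowski's integral inequality. Since $W(\tau) = P(\tau)\circ S(\tau)$ is a Fourier multiplier, the bounds will come from a direct pointwise analysis of the symbol, split into low and high frequency regimes.

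For \eqref{lin4}, I would first establish the uniform bound
\[
\sup_{0 \le \tau \le 1} \bigl\|\jb{\nb}^s W(\tau) f\bigr\|_{L^2_x(\R^2)} \les \|f\|_{L^2_x(\R^2)}
\]
for $0 \le s \le 1$. The Fourier symbol is $\jb{\xi}^s\, e^{-|\xi|\tau/2}\,\frac{\sin(\frac{\sqrt 3}{2}|\xi|\tau)}{\frac{\sqrt 3}{2}|\xi|}$. Using $|\sin(x)/x| \le \min(1, C/|x|)$, for $|\xi|\le 1$ the symbol is bounded by $C\tau \le C$, while for $|\xi|>1$ it is bounded by $C\jb{\xi}^{s-1} \le C$, where the restriction $s \le 1$ is exactly what is used. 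Once this uniform $L^2 \to H^s$ bound is in hand, Minkowski's integral inequality gives
\[
\bigg\|\int_0^t W(t-t')F(t')\,dt'\bigg\|_{H^s_x} \le \int_0^T \|W(t-t')F(t')\|_{H^s_x}\,dt' \les \|F\|_{L^1_t L^2_x},
\]
uniformly in $t \in [0,T]$, and continuity in $t$ follows from dominated convergence applied to the integrand.

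For \eqref{lin5}, I would differentiate the Duhamel integral under the integral sign. Since $W(0) = 0$ (the symbol vanishes at $\tau = 0$), the boundary term in the Leibniz rule drops out, and we are left with $\int_0^t (\dt W)(t-t')F(t')\,dt'$. Computing directly,
\[
\widehat{\dt W}(\tau,\xi) = -\tfrac{|\xi|}{2}\, e^{-|\xi|\tau/2}\,\frac{\sin(\tfrac{\sqrt 3}{2}|\xi|\tau)}{\tfrac{\sqrt 3}{2}|\xi|} + e^{-|\xi|\tau/2}\cos\big(\tfrac{\sqrt 3}{2}|\xi|\tau\big),
\]
and both summands are uniformly bounded in $(\tau,\xi) \in [0,1]\times \R^2$: the first by $C\min(|\xi|\tau, e^{-|\xi|\tau/2})$ and the second by $1$. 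Therefore $\jb{\xi}^{s-1}\,|\widehat{\dt W}(\tau,\xi)| \les 1$ for any $s \le 1$, giving the uniform bound $\|\dt W(\tau)\|_{L^2 \to H^{s-1}} \les 1$ for $\tau \in [0,1]$. Minkowski's integral inequality again produces the claimed estimate.

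The only mildly delicate point is justifying the differentiation under the integral (including the vanishing of the boundary term), which I would handle by first working with smooth, compactly supported $F$ and then passing to the limit by density, using the uniform bounds just derived. There is no real obstacle here; the range $0 \le s \le 1$ is precisely tuned so that both the high-frequency contribution in \eqref{lin4} (where $\jb{\xi}^{s-1} \le 1$ saves a derivative against $\sin/|\xi|$) and the low-frequency contribution in \eqref{lin5} (where $\jb{\xi}^{s-1}$ remains bounded) behave well simultaneously.
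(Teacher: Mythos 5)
Your proof is correct and essentially mirrors the paper's: both estimates are obtained by Minkowski's integral inequality once one checks that $W(\tau)$ and $\partial_t W(\tau)$ have Fourier symbols giving uniform $L^2 \to H^s$ (resp.\ $L^2 \to H^{s-1}$) bounds on $[0,1]$, with the restriction $0\le s\le 1$ entering exactly as you describe. The only details the paper elides---the low/high frequency split of the symbol, the vanishing boundary term from $W(0)=0$, and the justification of differentiating under the integral---are precisely the ones you supply, so there is no gap and no genuinely different route.
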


\begin{proof}
The first estimate \eqref{lin4} follows from Minkowski's integral inequality with \eqref{A2}.
As for the second estimate \eqref{lin5}, 
we first note from \eqref{A2} that 
\[ \dt \int_0^t W(t - t')F(t') dt'
= \int_0^t \dt W (t - t') F(t') dt', \]

\noi
where 
\[ \dt W(t) 
= e^{- \frac{D}{2}t} \bigg(\cos \big(\tfrac{\sqrt{3}}{2} Dt\big) 
- \frac{1}{\sqrt{3}}\sin \big(\tfrac{\sqrt{3}}{2} Dt\big) \bigg).\]

\noi
Then, the second estimate \eqref{lin5} follows from Minkowski's integral inequality
and the boundedness of $\dt W(t)$ on $H^{s-1}(\R^2)$.
\end{proof}

\subsection{Local well-posedness}
\label{SUBSEC:3.2}

We now present the proof of Theorem \ref{THM:LWP2}.

\begin{proof}[Proof of Theorem \ref{THM:LWP2}]
Fix  $s > -\frac{1}{5}$ and $(u_0, u_1) \in \H^s(\R^2)$.
Then, there exists small $\dl > 0$ such that 
\begin{equation}\label{delta}
s > -\frac{1}{5 + \delta}, 
\end{equation}
and we fix this choice of $\delta > 0$ for the remainder of the proof. 

Fix $C_0 > 0$ and 
define  the event $\O_T = \O_T(C_0)$ by setting 
\begin{equation}
\O_{T} = \big\{ \o \in \O: ||z||_{L^{5+\delta}([0, T]; L^{10}_{x})} \le
C_0 
 \big\}.
 \label{O1}
\end{equation}

\noi
Then, from 
 the probabilistic Strichartz estimate (Proposition \ref{PROP:PS})
 (see also \eqref{PS1a})
 with \eqref{z1}, \eqref{R1}, and  
 \eqref{delta}
  (which guarantees $\al_0 q < 1$ in invoking \eqref{PS1a} with $\al_0 = -s$ and $q = 5+\dl$),  
  we have 
\begin{align}
P(\O_T^c) 
\leq C\exp\Bigg(-c\frac{C_0^2 }{T^{\frac 2 q +2s } \|   (u_0, u_1)  \|_{\H^s}^{2}}\Bigg)
\label{PS1x}
\end{align}
	
\noi
for any $0 < T \le 1$.
 We remark that the choice of $C_0 > 0$ does not matter, 
 and that the specific value of $C_0 > 0$ affects only the size of $T_{0} \ll 1$ and the constants in the estimate~\eqref{probbound1}.

By writing \eqref{NLW6} in the Duhamel formulation, we have
\begin{align*}
v(t) = \G_{(v_0, v_1), z} (v)(t) := V(t) (v_0, v_1) - \int_0^t W(t - t') (v+z)^5(t') dt'.
%\label{NLW7}
\end{align*}

\noi
For simplicity, we set $\G =  \G_{(v_0, v_1), z} $. Let $\vec \G(v) = (\G(v), \dt \G(v))$.
Let $s_0 = s_0(\dl) = \frac 35+\dl_0$ as in \eqref{lin2a}.
Then, given $T > 0$, 
define the solution space $Z(T)$ by setting
\begin{align*}
Z(T) = X(T) \times Y(T), 
\end{align*}

\noi
where $X(T)$ and $Y(T)$ are defined by 
\begin{align*}
X(T) &  = C([0, T]; H^{s_0}(\R^2))\cap 
L^{5+\dl}([0, T];  L^{10}(\R^2))\\
Y(T) & =  C([0, T]; H^{s_0-1}(\R^2)).
%\label{L1}
\end{align*}

\noi
In order to prove 
Theorem \ref{THM:LWP2}, 
we  show that 
there exists small $ 0 < T_{0} \ll 1$ such that  $\vec \G: (v, \partial_{t}v) \mapsto (\G(v), \dt \G(v))$ 
is a strict contraction  on an appropriate closed ball in $Z(T)$ for any $0 < T \le T_{0}$
and for any $\o \in \O_T$, where $\O_T$
is as in \eqref{O1}.
The probability estimate~\eqref{probbound1} on $\O_T^c$ follows
from \eqref{PS1x}.

Fix  arbitrary $\omega \in \O_T$ for $0 < T \le T_{0}$, where $T_0$ is to be determined later.
 Recall $\vec \G(v) = (\G(v), \dt \G(v))$.
Note that the ordered pair $(5 + \delta, 10)$ is $\sigma$-admissible for $\sigma \ge \frac{1}{2}$ in the sense of \eqref{admis1} and furthermore, it satisfies the scaling condition \eqref{admis2} with $s_{0}$ as defined in \eqref{lin2a}. Then, by Lemmas \ref{LEM:Str}, \ref{LEM:lin1}, and \ref{LEM:lin2}
with \eqref{O1}, 
we have 
\begin{align*}
\|\vec \G(v)\|_{Z(T)}
& \les \| (v_0, v_1) \|_{\H^{s_0}}
+ \|(v+z)^5\|_{L^1([0, T]; L^2_x)}\\
& \les \| (v_0, v_1) \|_{\H^{s_0}}
+ T^\ta \Big(\|v\|_{L^{5+\dl}([0, T]; L^{10}_x)}
+ \|z\|_{L^{5+\dl}([0, T]; L^{10}_x)}\Big)\\
& \les \| (v_0, v_1) \|_{\H^{s_0}}
+ T^\ta \Big(\|\vec v\|_{Z(T)}
+ C_0 \Big)
\end{align*}

\noi
for some $\ta > 0$, where $\vec v = (v, \dt v)$.

A similar computation yields the following difference estimate:
\begin{align*}
\|\vec \G(v) - \vec \G(w)\|_{Z(T)}
& \les 
\|(v+z)^5 - (w+z)^5\|_{L^1([0, T]; L^2_x)}\\
& \les T^\ta \Big( \| v\|_{L^{5+\dl}([0, T]; L^{10}_x)}^4 
+ \| w\|_{L^{5+\dl}([0, T]; L^{10}_x)}^4\\
& \hphantom{XXXX}+ \|z\|_{L^{5+\dl}([0, T]; L^{10}_x)}^4\Big)
 \| v -  w\|_{L^{5+\dl}([0, T]; L^{10}_x)}\\
 & \les T^\ta \Big( \|\vec v\|_{Z(T)}^4 
+ \|\vec w\|_{Z(T)}^4
+ C_0^4\Big)
 \|\vec v - \vec w\|_{Z(T)}.
\end{align*}

\noi
Hence 
by choosing $T_{0} > 0$ sufficiently small, depending on the initial choice of $C_0 > 0$ and $||(v_{0}, v_{1})||_{\mathcal{H}^{s_{0}}}$, we see that $\vec \G
=  \vec \G_{(v_0, v_1), z} $
is a strict contraction on the ball in $Z(T)$ of radius $\sim 
1 + \| (v_0, v_1) \|_{\H^{s_0}}$, 
 %if $(v_0, v_1)$ is nonzero, or on the ball of radius $\sim \lambda$ if $(v_0, v_1) = (0, 0)$, 
 whenever $\omega \in \O_T$ and $0 < T \le T_{0}$. 
This proves almost sure local well-posedness of \eqref{NLW6} 
(and \eqref{NLW1}) for $s > -\frac 15$.
This concludes the proof of Theorem~\ref{THM:LWP2}
(and hence of Theorem \ref{THM:LWP1}).
\end{proof}

\medskip

Let us conclude this section by stating some corollaries
and remarks.
Given $N \in \N$, let $\P_{\le N}$ denote
a smooth projection onto the (spatial) frequencies $\{|\xi|\leq  N\}$.
Then, consider the following perturbed vNLW:
\begin{align}
\begin{cases}
\dt^2 v_N  -  \Dl  v_N  + D \dt v_N  + (v_N + z_N)^5 =0\\
(v_N, \dt v_N) |_{t = 0} = (\P_{\le N}v_0, \P_{\le N}v_1), 
\end{cases}
\label{NLW6b}
\end{align}

\noi
where $z_N$ denotes
the truncated random linear solution defined by 
\begin{align*}
z_N(t) = V(t) (\P_{\le N}u_0^\o, \P_{\le N}u_1^\o).
%\label{z6}
\end{align*}

\noi
Then, a slight modification of the proof of Theorem \ref{THM:LWP2} yields
the following approximation result.

\begin{corollary}\label{COR:LWP3}
Let $s > -\frac 15$
and $s_0 > \frac 35$ be as in Theorem \ref{THM:LWP2}.
Fix $(v_0, v_1) \in \H^{s_0}(\R^2)$.
Let $\O_T$ be as in Theorem~\ref{THM:LWP2}.
Furthermore, for each $\o \in \O_T$, let  
 $(v, \dt v)$ 
be the solution to \eqref{NLW6} on $[0, T]$
with $(v, \dt v) |_{t = 0} = (v_0, v_1)$
constructed in Theorem \ref{THM:LWP2}.
By possibly shrinking the local existence time $T$
by a constant factor \textup{(}while keeping 
the definition \eqref{O1} of $\O_T$ unchanged\textup{)}, 
for each $\o \in \O_T$, 
the solution  
 $(v_N, \dt v_N)$ 
 to \eqref{NLW6b}
converges to $(v, \dt v)$ in the class \eqref{class1}
as $N \to \infty$.

\end{corollary}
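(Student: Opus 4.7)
The plan is to run a Duhamel difference estimate parallel to the contraction scheme of Theorem \ref{THM:LWP2}. The crucial observation is that since $\P_{\le N}$ is a Fourier multiplier, it commutes with both $V(t)$ and the unit-frequency cutoffs $\psi(D-n)$ appearing in the randomization \eqref{R1}, so $z_N = \P_{\le N} z$. Consequently, no probabilistic input beyond $\o \in \O_T$ is needed: the convergence $z_N \to z$ reduces to a deterministic Littlewood--Paley statement.

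First I would establish the following data and source convergences. By uniform $L^p$-boundedness of $\P_{\le N}$ on $L^{10}(\R^2)$ (and on $\H^{s_0}$), for every $\o \in \O_T$ one has
\begin{align*}
\|z_N\|_{L^{5+\dl}_t L^{10}_x} \les \|z\|_{L^{5+\dl}_t L^{10}_x} \le CC_0, \qquad \|(\P_{\le N}v_0, \P_{\le N}v_1)\|_{\H^{s_0}} \le \|(v_0, v_1)\|_{\H^{s_0}},
\end{align*}
uniformly in $N$, together with the deterministic convergences $z_N \to z$ in $L^{5+\dl}([0,T]; L^{10}_x)$ (by dominated convergence in $t$, using $\|\P_{\le N}f\|_{L^{10}} \les \|f\|_{L^{10}}$ and the standard pointwise convergence $\P_{\le N}f \to f$ in $L^{10}(\R^2)$) and $\P_{\le N}(v_0, v_1) \to (v_0, v_1)$ in $\H^{s_0}$. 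I would then rerun the contraction argument of Theorem \ref{THM:LWP2} verbatim, with $(z, (v_0, v_1))$ replaced by $(z_N, (\P_{\le N}v_0, \P_{\le N}v_1))$. Shrinking $T_0$ by a constant factor to accommodate the constant $CC_0$ in place of $C_0$, I obtain $\vec v_N \in Z(T)$ solving \eqref{NLW6b} with a uniform bound $\|\vec v_N\|_{Z(T)} \le R$, where $R \les 1 + \|(v_0, v_1)\|_{\H^{s_0}}$, for all $N$ and all $\o \in \O_T$.

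The heart of the argument is a Duhamel subtraction. Writing
\begin{align*}
v - v_N = V(t)\big((I-\P_{\le N})v_0, (I-\P_{\le N})v_1\big) - \int_0^t W(t-t')\big[(v+z)^5 - (v_N+z_N)^5\big]\,dt'
\end{align*}
and applying Lemmas \ref{LEM:Str}, \ref{LEM:lin1}, and \ref{LEM:lin2}, together with the pointwise bound $|a^5 - b^5| \les (|a|^4 + |b|^4)|a-b|$ and H\"older's inequality in space and time, I obtain an estimate of the form
\begin{align*}
\|\vec v - \vec v_N\|_{Z(T)} \le C\|(I-\P_{\le N})(v_0, v_1)\|_{\H^{s_0}} + CT^\theta R^4 \Big(\|\vec v - \vec v_N\|_{Z(T)} + \|z - z_N\|_{L^{5+\dl}_t L^{10}_x}\Big).
\end{align*}
A final shrinking of $T_0$ by a constant factor (depending on $R$) so that $CT^\theta R^4 \le \frac 12$ -- this is the ``shrinking by a constant factor'' alluded to in the statement -- allows me to absorb the first difference term on the right. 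The resulting inequality $\|\vec v - \vec v_N\|_{Z(T)} \les \|(I-\P_{\le N})(v_0, v_1)\|_{\H^{s_0}} + \|z - z_N\|_{L^{5+\dl}_t L^{10}_x}$, combined with the $N\to\infty$ convergences of the previous paragraph, then concludes.

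The only potential obstacle is conceptual rather than technical: one must recognize that the convergence $z_N \to z$ in $L^{5+\dl}_t L^{10}_x$ does not require the probabilistic Strichartz estimate Proposition \ref{PROP:PS} with a Borel--Cantelli refinement, but follows directly from the deterministic Littlewood--Paley identity $z_N = \P_{\le N} z$ once $\o \in \O_T$ guarantees $z \in L^{5+\dl}_t L^{10}_x$. Everything else is a routine adaptation of the estimates already developed for the fixed-point argument in Subsection \ref{SUBSEC:3.2}.
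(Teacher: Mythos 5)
Your proof is correct and is precisely the ``slight modification of the proof of Theorem \ref{THM:LWP2}'' that the paper invokes without further elaboration: a Duhamel subtraction combined with the same contraction estimates. Your key observation that $z_N = \P_{\le N} z$ (because $\P_{\le N}$ and $V(t)$ are both Fourier multipliers, hence commute), so that the convergence $z_N \to z$ in $L^{5+\dl}_t L^{10}_x$ is purely deterministic once $\o \in \O_T$, is exactly the content behind the parenthetical ``keeping the definition \eqref{O1} of $\O_T$ unchanged,'' and you make it explicit where the paper leaves it implicit.
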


%We conclude this section by stating a local well-posedness result for 
Next, consider the 
following perturbed vNLW:
\begin{align}
\begin{cases}
\dt^2 v  -  \Dl  v  + D \dt v  + (v + f)^5 =0\\
(v, \dt v) |_{t = t_0} = (v_0, v_1), 
\end{cases}
\label{NLW6a}
\end{align}

\noi
where $f$ is a given deterministic function.
As a corollary to the proof of Theorem \ref{THM:LWP2}, 
we have the following local well-posedness result of \eqref{NLW6a}.

\begin{corollary}\label{COR:LWP4}
Let $s > -\frac 15$,  $s_0 > \frac 35$, 
and small $\dl > 0$  be as in Theorem \ref{THM:LWP2}.
Fix $(v_0, v_1) \in \H^{s_0}(\R^2)$
and fix $t_0\in \R_+$.
Suppose that
\[f \in L^{5+\dl}([t_0, t_0 + 1]; L^{10}(\R^2)).\]
Then, there exists $T = T\big(\| (v_0, v_1) \|_{\H^{s_0}}, 
\|f\|_{L^{5+\dl}([t_0, t_0+ T]; L^{10}_x)}\big) >0$
and 
 a \textup{(}unique\textup{)} 
solution $(v, \dt v)$ 
to \eqref{NLW6a}
on the time interval $[t_0, t_0 + T]$
with $(v, \dt v) |_{t = t_0} = (v_0, v_1)$
in the class
\begin{align*}
 (v, \dt v) \in C([t_0,  t_0 + T]; \H^{s_0} (\R^2))
\qquad \text{and}\qquad 
v\in  L^{5+\dl}([t_0, t_0 + T];  L^{10}(\R^2)).
%\label{class2}
\end{align*}

\end{corollary}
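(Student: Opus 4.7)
Since equation \eqref{NLW6a} is autonomous, by a time translation we may assume $t_0 = 0$. The plan is to run a deterministic version of the contraction argument used in the proof of Theorem \ref{THM:LWP2}, with the given function $f$ playing the role previously played by the random linear solution $z$. Writing \eqref{NLW6a} in Duhamel form gives
\begin{align*}
v(t) \,=\, \Gamma_{(v_0, v_1), f}(v)(t) \,:=\, V(t) (v_0, v_1) - \int_0^t W(t - t')\, (v + f)^5(t')\, dt',
\end{align*}
and I would seek a fixed point of $\vec\Gamma = (\Gamma, \partial_t \Gamma)$ in the same space $Z(T) = X(T) \times Y(T)$ introduced in the proof of Theorem \ref{THM:LWP2}, with $s_0 > \tfrac 35$ and $\delta > 0$ as specified there. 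Set
\begin{align*}
M(T) \,:=\, \|f\|_{L^{5+\delta}([0,T]; L^{10}_x)},
\end{align*}
which is finite for every $T \in (0,1]$ by hypothesis.

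Next, applying Lemma \ref{LEM:Str} to control the linear evolution $V(t)(v_0, v_1)$ in $Z(T)$ (noting, as in the proof of Theorem \ref{THM:LWP2}, that $(5+\delta, 10)$ is $\sigma$-admissible with $\sigma \geq \tfrac12$ and satisfies the scaling identity \eqref{admis2} with the chosen $s_0$), and combining Lemmas \ref{LEM:lin1} and \ref{LEM:lin2} to bound the Duhamel integral by $\|(v+f)^5\|_{L^1_t L^2_x}$, followed by H\"older's inequality in time (which produces a gain $T^\theta$ for some $\theta > 0$ from the mismatch between $L^{5+\delta}_t$ and $L^5_t$), I obtain an estimate of the schematic form
\begin{align*}
\|\vec \Gamma(v)\|_{Z(T)} \,\lesssim\, \|(v_0, v_1)\|_{\H^{s_0}} + T^\theta \bigl( \|\vec v\|_{Z(T)}^5 + M(T)^5 \bigr),
\end{align*}
together with the analogous difference estimate
\begin{align*}
\|\vec \Gamma(v) - \vec \Gamma(w)\|_{Z(T)} \,\lesssim\, T^\theta \bigl( \|\vec v\|_{Z(T)}^4 + \|\vec w\|_{Z(T)}^4 + M(T)^4 \bigr)\, \|\vec v - \vec w\|_{Z(T)}.
\end{align*}

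Finally, choosing the radius $R \sim 1 + \|(v_0, v_1)\|_{\H^{s_0}}$ and then selecting $T > 0$ sufficiently small, with the smallness depending only on $\|(v_0, v_1)\|_{\H^{s_0}}$ and $M(T)$, makes $\vec\Gamma$ a strict contraction on the closed ball of radius $R$ in $Z(T)$. Its unique fixed point is the desired solution in the class stated. The only conceptual difference with Theorem \ref{THM:LWP2} is that the control $M(T) < \infty$ on the forcing term, which there was extracted from the probability space $\Omega_T$ via Proposition \ref{PROP:PS}, is here supplied directly by the hypothesis $f \in L^{5+\delta}([0,1]; L^{10})$; there is therefore no real obstacle and the main task is just to observe that the linear estimates in Section \ref{SUBSEC:3.1} depend only on the length of the time interval, so they transfer without change from $[0, T]$ to $[t_0, t_0 + T]$.
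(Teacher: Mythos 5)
Your proposal is correct and matches the paper's intended argument: the paper states Corollary~\ref{COR:LWP4} ``as a corollary to the proof of Theorem~\ref{THM:LWP2}'' without further detail, and your deterministic contraction argument --- translating to $t_0 = 0$, replacing the random linear solution $z$ and the event $\O_T$ by the given $f$ and the hypothesis $M(T) < \infty$, and closing the fixed point in the same space $Z(T)$ via Lemmas~\ref{LEM:Str}, \ref{LEM:lin1}, and \ref{LEM:lin2} --- is exactly that adaptation. (If anything, your estimate with the fifth powers $\|\vec v\|_{Z(T)}^5 + M(T)^5$ is the more careful bookkeeping of the H\"older step.)
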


\begin{remark}\label{Remark3.6}\rm
(i) In terms of the current approach based on the first order expansion
\eqref{exp1}, the threshold $s = -\frac 15$ seems to be sharp.
Since we need to measure the quintic power in $L^1$ in time, 
this forces us to measure the random linear solution  essentially in $L^5$
in time.  In view of Proposition \ref{PROP:PS},  
local-in-time integrability of $t^s$ in $L^5$ requires $s > -\frac 15$.
It is worthwhile to note that  the regularity restriction $s > -\frac 15$ comes only from the temporal integrability
and does not have anything to do with the spatial integrability.
%allowing us to work
%This is the r
%and thus we do not need to use diagonal Strichartz spaces.

With a $p$th power nonlinearity $|u|^{p-1}u$, $p > 1$,  (in place of the quintic power $u^5$), 
a similar argument shows almost sure local well-posedness of \eqref{NLW1} for $s > -\frac 1p$, 
which is essentially sharp
(in terms of the first order expansion). 
For $p \notin 2\N + 1$, the nonlinearity is not algebraic and thus we need to proceed 
as in \cite{OOP}, where probabilistic well-posedness
of the nonlinear Schr\"odinger equations with non-algebraic nonlinearities
was studied.
See~\cite{Liu} for details.
See also Remark \ref{REM:2}.

\smallskip

\noi
(ii) It would be of interest to 
investigate if  higher order expansions, 
such as those in \cite{BOP3, OPTz}, 
give any improvement over Theorem \ref{THM:LWP2}
on almost sure local well-posedness.
One may also adapt the paracontrolled approach 
used for the stochastic NLW
\cite{GKO2, OOTol, Bring, OOTol2}
to study vNLW with random initial data.
\end{remark}

\section{Global well-posedness}\label{SEC:GWP}

In this section, we prove almost sure global well-posedness of \eqref{NLW1}.
As noted in \cite{CO, BOP2}, 
it suffices to prove the following 
 ``almost'' almost sure global well-posedness result.

\begin{proposition}\label{PROP:aasGWP}
Let $s> -\frac 15$.
Given $(u_0, u_1)  \in \H^s(\R^2)$, let $(u_0^\o, u_1^\o)$ be its Wiener randomization defined in \eqref{R1},
satisfying \eqref{B3}.
Then, given any $T, \eps > 0$, there exists a set $ \O_{T, \eps}\subset \O$
such that
\begin{itemize}
\item[\textup{(i)}]
$P( \O_{T, \eps}^c) < \eps$,

\item[\textup{(ii)}]
For each $\o \in \O_{T, \eps}$, there exists a \textup{(}unique\textup{)} solution $u$
to \eqref{NLW1}  on $[0, T]$
with $(u, \dt u)|_{t = 0} = (u_0^\o, u_1^\o)$.

\end{itemize}

\end{proposition}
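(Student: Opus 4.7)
\textbf{Proof proposal for Proposition \ref{PROP:aasGWP}.} The plan is to combine the local theory of Section \ref{SEC:LWP} with a probabilistic energy estimate for the residual term $v = u - z$. Fix $T,\eps > 0$. First, I would design the exceptional set $\O_{T,\eps}$ by intersecting the local-theory set $\O_T$ from Theorem \ref{THM:LWP1} with several events of the form
\begin{align*}
\big\{\|D^{\al} V(t)(u_0^\o,u_1^\o)\|_{L^q([0,T];L^r_x)} \le R(T,\eps)\big\}
\qquad\text{and}\qquad
\big\{\|\wt V(t)(u_0^\o,u_1^\o)\|_{L^\infty([T_0,T];L^r_x)} \le R(T,\eps)\big\},
\end{align*}
using Propositions \ref{PROP:PS} and \ref{PROP:PS2} to choose $R(T,\eps)$ large enough to make the total exceptional probability less than $\eps$. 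These norms will control the linear solution $z = V(t)(u_0^\o,u_1^\o)$ and its time derivative (written via $\wt V$) in every mixed-norm space we need for the nonlinear cross terms.

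Next, I would work with the perturbed equation \eqref{NLW5} for $\vec v = (v,\dt v)$ satisfied by the residual. The local theory produces $\vec v \in C([0,T_0];\H^{s_0})$ with $s_0 > \tfrac 35$, which is below the $\H^1$ regularity needed to talk about the defocusing wave energy $E(\vec v)$ in \eqref{E0}. To remedy this, I would use \eqref{A6} and the Schauder-type bound (Lemma \ref{LEM:Sch}) applied to the Duhamel representation of $v$: the Poisson kernel $P(t)$ is smoothing, so $\vec v(t)$ is in $\H^1(\R^2)$ for every $t > 0$, with a factor $t^{-(1-s_0)}$ that is locally integrable because $s_0 > 0$. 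Thus one may work on $[T_0,T]$ starting from $\H^1$ data at some small positive time $T_0$.

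On this interval I would compute the time derivative of the energy along the perturbed equation:
\begin{align*}
\frac{d}{dt} E(\vec v)
= -\|\dt v\|_{\dot H^{1/2}}^2 - \int_{\R^2} \dt v\,\big[(v+z)^5 - v^5\big]\,dx.
\end{align*}
Expanding the binomial produces five cross terms $\int \dt v\cdot v^k z^{5-k}\,dx$ for $k=0,\dots,4$. For $k \geq 1$, I would apply H\"older's inequality in $x$ to bound each such term by a product of a power of $E(\vec v)^{1/2}$ (absorbing $\dt v$ and the factors of $v$ via Sobolev/energy bounds) and a mixed-norm quantity in $z$ that is finite on $\O_{T,\eps}$; integrating in time and using the control already gained on $\|\dt v\|_{\dot H^{1/2}}$ yields Gronwall-type contributions. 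The dangerous term is the purely random one ($k=0$), namely $\int \dt v\cdot z^5\,dx$, which has no factor of $v$ to play with. Following Oh--Pocovnicu \cite{OP}, I would integrate this term by parts in time,
\begin{align*}
\int_0^t\!\!\int_{\R^2} \dt v\cdot z^5\,dx\,dt'
= \int_{\R^2} v\, z^5\,dx\Big|_0^t
- 5\int_0^t\!\!\int_{\R^2} v\, z^4\, \dt z\,dx\,dt',
\end{align*}
and control the boundary term by $\|v\|_{L^\infty_t L^6_x}\|z\|_{L^\infty_t L^{30/4}_x}^5$ and the bulk term using the $L^\infty_t$-estimates for $\dt z$ provided by Proposition \ref{PROP:PS2} (which is precisely why $\wt V(t)$ was introduced in \eqref{z3}).

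Assembling these bounds yields an inequality of Gronwall type
\begin{align*}
E(\vec v(t)) \le C(T,\eps) + \int_{T_0}^t C(T,\eps)\,\big(1 + E(\vec v(t'))\big)\,dt',
\end{align*}
on $\O_{T,\eps}$, which gives an a priori bound on $E(\vec v)$ on $[0,T]$. This a priori $\H^1$-bound on $\vec v$ together with Corollary \ref{COR:LWP4} (applied to \eqref{NLW6a} with $f = z$, restarted at times where the current solution exists) allows one to iterate the local existence time across $[0,T]$ a finite number of times. The main obstacle I anticipate is the bookkeeping of the Oh--Pocovnicu step: one must verify that the boundary term at $t$ is indeed absorbed by $\tfrac 12 \cdot \tfrac 16 \|v(t)\|_{L^6}^6$ (a fractional power of the energy) after applying Young's inequality, and that the bulk term's $z$-factors $z^4\dt z$ can be distributed in $L^{q_j}_t L^{r_j}_x$ spaces compatible both with $q_j\al < 1$ in Proposition \ref{PROP:PS} (so that the deterministic estimates apply at regularity $s > -\tfrac 15$) and with the Sobolev embedding bounds on $v$ coming from $E(\vec v)^{1/2}$.
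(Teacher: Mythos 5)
Your overall architecture matches the paper's: combine the local theory with Schauder smoothing (Lemma \ref{LEM:Sch}) to place $\vec v$ in $\H^1$ at a positive time $T_0$, set up an exceptional set controlled by Propositions \ref{PROP:PS} and \ref{PROP:PS2}, prove a Gronwall-type energy bound following \cite{OP}, and iterate Corollary \ref{COR:LWP4} across $[0,T]$. However, you have misidentified the term in the energy flux that actually forces the Oh--Pocovnicu integration-by-parts trick, and this gap would break the Gronwall step.

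Expanding $(v+z)^5 - v^5 = 5zv^4 + 10 z^2 v^3 + 10 z^3 v^2 + 5 z^4 v + z^5$, the purely random term $\int \dt v \cdot z^5\, dx$ is \emph{not} the dangerous one. By H\"older and Young it is bounded by $\|z\|_{L^{10}_x}^5\|\dt v\|_{L^2_x} \lesssim \|z\|_{L^{10}_x}^{10} + E(\vec v)$, which is already Gronwall-friendly with no integration by parts at all; the paper absorbs it into the lower-order block $\II$ together with $10 z^2 v^3 + 10 z^3 v^2 + 5 z^4 v$ and estimates $|\II| \lesssim (1+\|z\|^2_{L^\infty_{t,x}})\int E(\vec v)\,dt' + \|z\|^{10}_{L^{10}_{t,x}}$. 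The term that genuinely fails to close is the opposite extreme, $\int \dt v \cdot 5 z v^4\,dx$: any H\"older placement with $z$ in some $L^p_x$ and $\dt v$ in $L^2_x$ forces $v$ into a norm such as $L^8_x$ or $L^{12}_x$, and Gagliardo--Nirenberg in two dimensions then produces a superlinear power of $E(\vec v)$ (e.g.\ $\|z\|_{L^\infty_x}\|v\|_{L^8_x}^4\|\dt v\|_{L^2_x}\lesssim \|z\|_{L^\infty_x} E(\vec v)^{3/2}$), which Gronwall cannot handle. The paper therefore writes $5 z v^4\dt v = z\,\dt(v^5)$ and integrates by parts in time to obtain the boundary term $-\int z\,v^5\,dx\big|_{T_0}^t$ (handled by Young's inequality with a small constant absorbed into $\frac16\|v\|_{L^6}^6$) and the bulk term $\int_{T_0}^t\!\int \dt z\cdot v^5\,dx\,dt'$. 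Since $\dt z$ lives at regularity $s-1 < -1$, the paper introduces $\wt z = \jb{\nb}^{-1}\dt z$ and a nonhomogeneous Littlewood--Paley decomposition to move one derivative from $\jb{\nb}\wt z$ onto $v^5$, landing on the bound $\big\|\jb{\nb}^{s_1}\wt z\big\|_{L^\infty_{t,x}} E(\vec v)$ with $s_1>\tfrac12$; this is precisely why $\wt V(t)$ and Proposition \ref{PROP:PS2} are set up in the form they are.

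In short: you integrated by parts on the harmless term and claimed the hard term is controllable by straight H\"older. That is backwards, and the argument as written does not produce a Gronwall inequality. (As a smaller point, your H\"older pairing $\|v\|_{L^6_x}\|z\|_{L^{30/4}_x}^5$ has exponents that do not sum to $1$; the correct pairing is $\|v\|_{L^6_x}\|z\|_{L^6_x}^5$.)
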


It is easy to see from the Borel-Cantelli lemma that
almost sure global well-posedness (Theorem \ref{THM:GWP1}) follows
once we prove 
``almost'' almost sure global well-posedness
stated in Proposition \ref{PROP:aasGWP} above.
See~\cite{CO, BOP2}.
Hence, the remaining part of this 
section is devoted to the proof of Proposition \ref{PROP:aasGWP}.

Fix $T \gg1 $. 
In order to extend our local-in-time result for the initial value problem \eqref{NLW5} to a result on $[0, T]$ for arbitrary $T > 0$, we consider \eqref{NLW6a} with $f = z = V(t)(u_{0}^{\omega}, u_{1}^{\omega})$, given explicitly by 
\begin{align}
\begin{cases}
\dt^2 v  -  \Dl  v  + D \dt v  + (v + z)^5 =0\\
(v, \dt v) |_{t = t_0} = (v_0, v_1),
\end{cases}
\label{NLWglobal}
\end{align}
where $t_{0} \in \mathbb{R}^{+}$. In view of Corollary \ref{COR:LWP4}
and almost sure boundedness of 
the $L^{5+\dl}([0, T];  L^{10}_x(\R^2))$-norm
of the random linear solution $z (t)= V(t) (u_0^\omega, u_1^\omega)$ 
thanks  the probabilistic Strichartz estimate (Proposition \ref{PROP:PS}), 
it suffices to control the $\H^{s_0}$-norm 
of the remainder term $\vec v = (v, \dt v)$, 
where $s_0 = \frac 35 +\dl_0$ as in \eqref{lin2a} and the remainder term $v$ satisfies the initial value problem \eqref{NLW5}. This will allow us to extend the local-in-time result in Theorem \ref{THM:LWP2} to $[0, T]$ by iteratively applying Corollary \ref{COR:LWP4}.
In the next subsection, we first show a gain of regularity 
such that $(v(t), \dt v(t))$ indeed belongs to $\H^1(\R^2)$
as soon as $t > 0$.
Then, the problem is reduced to controlling the growth of the energy
$E(\vec v)$ in~\eqref{E0}, 
associated with the standard nonlinear wave equation,
since the energy $E(\vec v)$ controls  the $\mathcal{H}^{1}$-norm 
(and hence $\mathcal{H}^{s_{0}}$-norm) of the remainder term $(v, \partial_{t}v)$, as needed to establish the result. 
See Subsection \ref{SUBSEC:GWP2}.

\subsection{Gain of regularity}\label{SUBSEC:GWP1}
Consider the initial value problem \eqref{NLWglobal}. 
Fix $s > -\frac 15$ and $s_0 = \frac 35+\dl_0$ 
with small $\dl_0 > 0$ as in (the proof of) Theorem \ref{THM:LWP2}.
Let $(u_0^\o, u_1^\o)$
be the Wiener randomization of 
a given deterministic pair $(u_0, u_1)\in \H^s(\R^2)$
and fix $(v_0, v_1) \in \H^{s_0}(\R^2)$.

Let $T \gg 1$
and 
let $z(t) = V(t) (u_0^\omega, u_1^\omega)$ be 
 the random linear solution.
 Then,  it follows from 
 the probabilistic Strichartz estimate (Proposition \ref{PROP:PS})
 (see also \eqref{PS1b}) that there exists an almost surely finite
 random constant $C_\o = C_\o (T)> 0$ such that 
\begin{align}
\| z\|_{L^{5+\dl}([0,  T]; L^{10}_x)} \leq C_\o.
\label{X1}
\end{align}

\noi
Fix a good $\o\in \O$ such that $C_\o$ in \eqref{X1} is finite.
Then, from Corollary \ref{COR:LWP4}, 
we see that 
there exist $\tau_\o > 0$ and  a unique solution $\vec v = (v, \dt v)$ to \eqref{NLWglobal}
on the time interval $[t_0, t_0 + \tau_\o]$
with $(v, \dt v) |_{t = t_0} = (v_0, v_1)$
in the class
\begin{align*}
 (v, \dt v) \in C([t_0,  t_0 + \tau_\o]; \H^{s_0} (\R^2))
\qquad \text{and}\qquad 
v\in  L^{5+\dl}([t_0, t_0 + \tau_\o];  L^{10}(\R^2)).
%\label{class2}
\end{align*}

We  show that the solution $\vec v = (v, \dt v)$ to~\eqref{NLWglobal} in fact belongs
to $C((t_0,  t_0 + \tau_\o]; \H^1(\R^2))$, thanks to the smoothing due to the Poisson kernel $P(t)$ in \eqref{A2a}.
Fix $t > t_0$. By 
\eqref{A6} and Lemma \ref{LEM:Sch},
we have
\begin{align}
\|V(t-t_0) (v_0, v_1) \|_{\H^1}
\les (t-t_0)^{-1+s_0} \|(v_0, v_1 )\|_{\H^{s_0}}.
\label{G1}
\end{align}

\noi
%Now, fix a good $\o \in \O$ and 
%let $T_0 = T_0(\o)> 0$
%be the local existence time given by Theorem~\ref{THM:LWP2}.
Then, from \eqref{G1}, 
Lemma \ref{LEM:lin2} with $s = 1$, 
and \eqref{X1}, 
we have, for any $t_0 < t \leq t_0 +\tau_\o$,  
\begin{align*}
\|\vec v(t)\|_{\H^1} 
& \les   (t-t_0)^{-1+s_0} \|(v_0, v_1 )\|_{\H^{s_0}}
+ \|(v+z)^5\|_{L^1([t_0, t_0+\tau_\o]; L^2_x)}\\ 
& \les (t-t_0)^{-1+s_0}\| (v_0, v_1) \|_{\H^{s_0}}
+ \tau_\o^\ta \Big(\|v\|_{L^{5+\dl}([t_0, t_0+\tau_\o]; L^{10}_x)}
+ C_\o\Big)\\
& < \infty.
\end{align*}

\noi
This proves the gain of regularity for $\vec v = (v, \dt v)$.\footnote{Here, 
we did not show the continuity in time of $\vec v$ in $\H^1(\R^2)$
but this can be done by a standard argument, which we omit.}
In the following, our main goal is to control the $\H^1$-norm of $\vec v(t)$ on $[0, T]$
for any given $T \gg 1$.

\subsection{Energy bound}
\label{SUBSEC:GWP2}

\medskip
Fix $\eps > 0$.
Then, it follows from Theorem \ref{THM:LWP2} 
that there exists $\O_{T_0}$ with sufficiently small $T_0 = T_0(\eps) > 0$ such that 
\begin{align}
P( \O_{T_0}^c) < \frac \eps 2
\label{G2}
\end{align}

\noi
and, for each $\o \in \O_{T_0}$, 
 the local well-posedness of \eqref{NLW1} holds on $[0, T_0]$.

Fix a large target time $T \gg 1$.
In the following, by excluding further a set of small probability, 
we construct 
the solution $\vec v = (v, \dt v)$
on the time interval $[T_0, T]$ and hence on $[0, T]$.
Our
goal is to control the growth of the $\H^1$-norm of $\vec v(t)$ on $[T_0, T]$. To do this, we closely follow the procedure for a similar energy bound for a defocusing quintic nonlinear wave equation in \cite{OP} in the remainder of this section. Since the same argument in the proof of Proposition 4.1 in \cite{OP} applies to establishing the energy bound in the current context, we only summarize the main steps below and refer the reader to \cite{OP} for details.

\medskip

\textbf{Step 1:} \textit{Reduction to an energy bound.} In order to control the $\mathcal{H}^{1}$ norm of $\vec v(t)$ on $[T_{0}, T]$, it suffices to control the $\dot{\mathcal{H}}^{1}$ norm on $[T_{0}, T]$, where
$\dot \H^1(\R^2)
:= \dot H^1(\R^2) \times L^2(\R^2)$. This is due to the fundamental theorem of calculus:
%and Minkowski's integral inequality:
\begin{align*}
\|v (t)\|_{L^2_x}
&=\bigg\|\int_0^t\dt v(t')dt'\bigg\|_{L^2_x} 
\leq T\|\pa_t v\|_{L^\infty_T L^2_x},
%\label{G3}
\end{align*}

\noi
for $ 0 < t \leq T$. The $\dot{\mathcal{H}}^{1}$ norm of $\vec v$ is further controlled by the energy $E(\vec v)$ in \eqref{E0}. \textit{Hence, it suffices to control the energy $E(\vec v)$ on $[T_{0}, T]$.}

\medskip

\textbf{Step 2:} \textit{Statement of desired energy growth inequality on $[T_{0}, T]$.} We will derive an energy inequality to estimate $E(\vec v)(t) - E(\vec v)(T_{0})$ for $t \in [T_{0}, T]$. We remark that if we were considering the case of the cubic nonlinearity (rather than a quintic nonlinearity), we can follow
the Gronwall argument by Burq and Tzvetkov \cite{BT3}.
%See Remark \ref{REM:1}\,(ii).
In the current quintic case, however, this argument fails.
To overcome this difficulty, 
we employ the integration-by-parts trick
introduced by Pocovnicu and the second author~\cite{OP}
in studying almost sure global well-posedness
of the energy-critical defocusing quintic NLW on $\R^3$.

%We first introduce some notations.

Let $z(t) = V(t) (u_0^\o, u_1^\o)$ be the random linear solution defined in  \eqref{z1}.
With $\wt V(t)$ as in~\eqref{z3}, define $\wt z$ by 
\begin{align}
\wt z (t) = \jb{\nb}^{-1} \dt z(t) 
= \wt V(t) (u_0^\o, u_1^\o).
\label{z2}
\end{align}

\noi
Then, 
given $0 < T_0 < T$, we set $A(T_0, T)$ as
\begin{align}
\begin{split}
A(T_0, T) 
& = 1 + \|z \|^2_{L^\infty([T_0, T];L^\infty_x)} 
+ 
 \|z \|^{10}_{L^{10}([T_0, T]; L^{10}_x )}
 + \|z\|_{L^\infty([T_0, T];  L^6_x)}^6 \\
& \quad  + \|\wt z \|_{L^6([T_0, T]; L^6_x)}^6
+ \big\|\jb{\nb}^{s_1}  \wt z\big\|_{L^\infty([T_0, T]; L^\infty_x)}, 
\end{split}
\label{E1a}
\end{align}

\noi
where $s_1 > \frac 12$ is sufficiently close to $\frac 12$ 
(to be chosen later). Since we can control $A(T_{0}, T)$ with high probability by the probabilistic Strichartz estimate in Proposition \ref{PROP:PS2}, our goal is to obtain an energy inequality of the following form and apply Gronwall's inequality:
\begin{align}\label{energycontrol}
 E(\vec v)(t)
\les E(\vec v)(T_0) + A(T_0, T) + A(T_0, T) \int_{T_0}^t E(\vec v)(t') dt'.
\end{align}

\textbf{Step 3:} \textit{Calculation of $\frac{d}{dt}E(\vec v)$.} By using the equation \eqref{NLW5}, we have
\begin{equation}
\frac{d}{dt} E(\vec v)(t) =- \int_{\R^2}(D^\frac{1}{2}\dt v)^2  dx
- \int_{\R^2}\dt v  \big((v+z)^5  - v^5\big) dx.
\label{E1b}
\end{equation}
By using the fact that $- \int_{\R^2}(D^\frac{1}{2}\dt v)^2  dx \le 0$ and proceeding as in \cite{OP}, we deduce that
\begin{align}
\begin{split}
E( \vec v)(t) - 
&  E(\vec v)(T_0)
 \leq 
- 
\int_{T_0}^t
\int_{\R^2}  z (t') \dt ( v(t') ^5) dt' dx
- \int_{T_0}^t\int_{\R^2}
\dt v(t') \mathcal N(z, v)(t')  dx dt' \\
& =:\1(t) +\II(t)
\end{split}
 \label{E1}
\end{align}
for any $t \in [T_0, T]$, where $\NN(z, v)$ denotes the lower order terms in $v$:
\[\NN(z, v) = 10 z^2 v^3 + 10 z^3 v^2 + 5 z^4 v + z^5.\]

\textbf{Step 4:} \textit{Estimate of $\II(t)$.} We have reduced the goal of showing the energy bound estimate \eqref{energycontrol} to estimating the quantities $\1(t)$ and $\II(t)$. Using the same arguments in \cite{OP}, we have that 
\begin{align}
\begin{split}
|\II(t)| & \les \big(1+ \|z \|^2_{L^\infty([T_0, T];L^\infty_x)} \big)
\int_{T_0}^t E(\vec v)(t') dt' 
+ \|z \|^{10}_{L^{10}([T_0, T]; L^{10}_x )}\\
& \le A(T_0, T)
\int_{T_0}^t E(\vec v)(t') dt' 
+ A(T_0, T).
\end{split}
\label{E2}
\end{align}

\textbf{Step 5:} \textit{Estimate of $\1(t)$.} The estimate of $\1(t)$ is more involved. We proceed by using the integration-by-parts trick in \cite{OP} and integrating by parts in time, to obtain two quantities $\1_1(t)$ and $\1_2(t)$:
\begin{align}
\1(t)  = 
 - \int_{\R^2}  z (t')  v(t') ^5 dx\bigg|_{T_0}^t
+ \int_{\R^2} \int_{T_0}^t \dt z (t')  v(t') ^5dt' dx
=:\1_1(t')\Big|_{T_0}^t +\1_2(t).
\label{E3}
\end{align}

\noi
As for the first term $\1_1$, we use Young's inequality with exponents $6$ and $6/5$ to obtain 
\begin{align}
\begin{split}
|\1_1 (t) - \1_1(T_0)|
& \les \eps_0^{-6}\|z (T_0)\|_{L^6_x}^6  + \eps_0^\frac{6}{5} \|v(T_0)\|^6_{L^6_x} + \eps_0^{-6}\|z (t)\|_{L^6_x}^6  + \eps_0^\frac{6}{5} \|v(t)\|^6_{L^6_x}\\
& \les \eps_0^{-6} \|z\|_{L^\infty([T_0, T];  L^6_x)}^6 
+ \eps_0^\frac{6}{5} E(\vec v)(T_0)
 + \eps_0^\frac{6}{5} E(\vec v)(t)
\end{split}
\label{E4}
\end{align}

\noi
for some small constant $\eps_0 > 0$ (to be chosen later).

Next, we consider  the  second term $\1_2$ in \eqref{E3}.
While we closely follow the argument in~\cite{OP}, 
we summarize the procedure here for readers' convenience.
From \eqref{z2}, we have 
\begin{equation}
\1_2 (t) =  \int_{T_0}^t 
\int_{\R^2} \jb{\nabla} \wt z (t') \cdot  v(t') ^5 dx dt'.
\label{E4a}
\end{equation}

\noi
Given dyadic $M \geq 1$, let $\Q_M$
denote the \textit{nonhomogeneous} Littlewood-Paley projector onto the (spatial)
frequencies $\{|\xi|\sim M\}$. This means that $\Q_1$ is a smooth  projector onto the (spatial)
frequencies $\{|\xi|\les 1\}$ and by convention, $\Q_{2^{-1}} = 0$.
Then, 
define $\mathcal{I}(t)$ by 
\begin{equation*}
\I (t) : \!  =  \int_{\R^2} \jb{\nabla} \wt z (t)  \cdot v(t) ^5 dx
\end{equation*} 
and note that by using a Littlewood-Paley frequency decomposition, we have that
\begin{equation}\label{dyadicI2}
\mathcal{I}(t) \sim \sum_{\substack{M \geq 1\\ \text{dyadic}}}\I^M(t)
\end{equation}
for dyadic $M = 2^{k}$ with $k$ a nonnegative integer, and 
\begin{equation*}
\mathcal{I}^{M}(t) := \sum_{k = -1}^1
M \int_{\R^2} \Q_{2^k M} \wt z (t)  \Q_M\big( v(t) ^5\big) dx.
\end{equation*}

\noi
We also set 
\[\I^{M\geq 2}(t) = \I(t) - \I^1(t).\]

\medskip

\noi
$\bullet$ {\bf Case 1:} $M = 1$.
\quad 
In this case, we can bound the contribution 
to $|\1_2(t)|$ 
by Young's inequality as
\begin{align}
\begin{split}
\bigg|\int_{T_0}^t \I^1(t') dt'\bigg|
& \les \|\wt z \|_{L^6([T_0, T]; L^6_x)}^6
+ \int_{T_0}^t \|v (t')  \|_{L^6_{x}}^6 dt'\\
& \les A(T_0, T)
+ \int_{T_0}^t E(\vec v)(t') dt'.
\end{split}
\label{E4b}
\end{align}

\smallskip

\noi
$\bullet$ {\bf Case 2:} $M \geq 2$.
\quad 
We can follow the full details given in \cite{OP} and apply the Littlewood-Paley decomposition on each factor of $v$ in the quintic term $v^{5}$, in order to obtain the following estimate:
\begin{equation}
 |\I^{M\ge 2} (t)| \les 
\big\|\jb{\nb}^{s_2-\ta} \wt  z(t)\big\|_{L^\infty_x}E(\vec v),
\label{E4c}
\end{equation}

\noi
provided that $2(1-s_2+\ta+) \leq 1$.
Hence, 
by setting  $s_1 = s_2 -\ta$, it follows 
from \eqref{E1a} and \eqref{E4c} that 
\begin{align}
 \bigg|\int_{T_0}^t \I^{M\ge 2} (t')dt'\bigg|
& \les 
A(T_0, T) \int_{T_0}^t E(\vec v)(t') dt',
\label{E4d}
\end{align}

\noi
provided that $2(1-s_2+\ta+) \leq 1$, namely 
$s_2 \ge \frac 12 + \ta+$, 
which is satisfied by choosing $s_2 > \frac 12$ and $\ta > 0$ sufficiently small.
 This determines the choice of $s_1 = s_2 - \ta$ in \eqref{E1a}.
 
Therefore, from 
 \eqref{E4a}, \eqref{dyadicI2}, \eqref{E4b}, and \eqref{E4d},  we obtain
\begin{align}
\begin{split}
|\1_2 (t)| 
& \les 
\|\wt z \|_{L^6([T_0, T]; L^6_x)}^6
+ 
\Big(1+\big\|\jb{\nb}^{s_1}  \wt z\big\|_{L^\infty([T_0, T]; L^\infty_x)}\Big)
\int_{T_0}^t 
E(\vec v)(t') dt'\\
& \les
A(T_0, T)
+ 
A(T_0, T)
\int_{T_0}^t 
E(\vec v)(t') dt'.
\end{split}
\label{E5}
\end{align}

\textbf{Step 6:} \textit{Final estimate and Gronwall inequality.}  Putting
\eqref{E1}, \eqref{E2}, \eqref{E3}, \eqref{E4}, and \eqref{E5} together
and choosing sufficiently small $\eps_0 > 0$ in \eqref{E4},  we obtain
\begin{align*}
 E(\vec v)(t)
\les E(\vec v)(T_0) + A(T_0, T) + A(T_0, T) \int_{T_0}^t E(\vec v)(t') dt',
\end{align*}

\noi
for any $t \in [T_0, T]$.
Therefore, from Gronwall's inequality,
we conclude that 
\begin{align}
 E(\vec v)(t)
\les C\big(T_0, T, E(\vec v)(T_0), A(T_0, T)\big)
\label{Ex}
\end{align}

\noi
for any $t \in [T_0, T]$.

\begin{remark}\label{REM:2}\rm
(i) In order to justify 
the formal computation in this subsection, 
we need to proceed with the smooth solution $(v_N , \dt v_N)$
associated with the frequency truncated random initial data
(for example, to guarantee  finiteness
of the term $- \int_{\R^2}(D^\frac{1}{2}\dt v)^2  dx$ in~\eqref{E1b})
and then take $N \to \infty$, using the approximation argument (Corollary \ref{COR:LWP3}).
This argument, however, is standard and thus we omit details.
See, for example,  \cite{OP}.

\smallskip

\noi
(ii) In this section, we followed the argument in \cite{OP}
to obtain an  energy bound in the quintic case.
 In this argument, 
the first term after the first inequality in \eqref{E2} 
provides the restriction $p \leq 5$ 
on the degree of the nonlinearity $|u|^{p-1} u $.
For $p > 5$, we will need to apply 
the integration by parts trick to lower order terms
as well.  See for example \cite{Latocca}
in the context of the standard NLW.
In a recent preprint \cite{Liu}, 
Liu extended Theorems \ref{THM:LWP1} and \ref{THM:GWP1}
to the super-quintic case ($p > 5$)
and proved almost sure global well-posedness
of the defocusing vNLW \eqref{vNLW1} 
in $\H^s(\R^2)$ for $s > -\frac 1p$.

%
%While it may be possible to obtain an energy bound
%(and thus almost sure global well-posedness) 
%for $p > 5$, we do not pursue this issue here.
%
\end{remark}

\subsection{Proof of Proposition \ref{PROP:aasGWP}}

Fix a target time $T \gg 1$ and small $\eps > 0$.
Then, let $T_0$ be as in 
\eqref{G2}.
With $A(T_0, T)$ as in \eqref{E1a}, 
set
\[A_\ld = \big\{ \o \in \O : A(T_0, T) < \ld\big\}\]

\noi
for $\ld > 0$.
From Proposition \ref{PROP:PS2}, 
there exists  $\ld_0 \gg 1$ such that 
\begin{align}
P(A_{\ld_0}^c) < \frac \eps 2.
\label{G4}
\end{align}
	
\noi
Now, set $\O_{T, \eps} = \O_{T_0} \cap A_{\ld_0}$.
Then, from \eqref{G2} and \eqref{G4}, 
we have $P(\O_{T, \eps}^c) < \eps$.

Let  $\o \in \O_{T, \eps}$.
From 
\eqref{E1a}
and H\"older's inequality, we see that $A(T_0, T)$ controls
the $L^{5+\dl}_t([T_0, T]; L^{10}_x )$-norm of $z$:
\[  \|z \|_{L^{5+\dl}([T_0, T]; L^{10}_x )}
\les T^\ta \ld_0^\frac{1}{10}\]

\noi
for some $\ta > 0$, 
where $\dl > 0$ is as in (the proof of) Theorem \ref{THM:LWP2}.
Then, together with  the energy bound \eqref{Ex} and the discussion in Section \ref{SUBSEC:GWP1}, 
we can iteratively apply 
Corollary \ref{COR:LWP4}
(see also the discussion right after Proposition~\ref{PROP:aasGWP})
and construct 
a  solution $u = z + v$
to \eqref{NLW1}  on $[0, T]$
with $(u, \dt u)|_{t = 0} = (u_0^\o, u_1^\o)$
for each   $\o \in \O_{T, \eps}$.
This proves Proposition  \ref{PROP:aasGWP}
and hence almost sure global well-posedness
(Theorem \ref{THM:GWP1}).

\begin{ackno}\rm

This work was partially supported by the National Science Foundation under grants DMS-1853340, and DMS-2011319 (\v{C}ani\'{c} and Kuan), and 
by the European Research Council under grant number 864138 ``SingStochDispDyn'' (Oh).

\end{ackno}

%\bibliography{viscousNLW}
%\bibliographystyle{plain}

\end{document}